\documentclass[a4paper,12pt,reqno]{amsart}
\usepackage[utf8]{inputenc}
\usepackage[american]{babel}
\usepackage{amsmath,amssymb,amsthm,amscd}
\usepackage{mathrsfs}
\usepackage[active]{srcltx}
\usepackage{mathrsfs}
\usepackage{enumerate}
\usepackage{lmodern}
\usepackage{braket}
\usepackage{paralist}
\usepackage[colorlinks,linkcolor={black},citecolor={black},urlcolor={black}]{hyperref}
\usepackage[margin=2.6cm,bmargin=3cm,tmargin=3cm]{geometry}

\newcommand{\supp}{\operatorname{supp}}

\newcommand{\A}{\mathcal{A}}
\newcommand{\cC}{\mathcal{C}} 
\newcommand{\cD}{\mathcal{D}} 

\newcommand{\W}{\mathcal{W}}

\newcommand{\cP}{P}
\newcommand{\cI}{\mathcal{I}}
\newcommand{\cL}{\mathscr{L}}

\newcommand{\cH}{\mathcal{H}}
\newcommand{\cJ}{\mathcal{J}}
\newcommand{\cA}{\mathcal{A}}
\newcommand{\cM}{\mathcal{M}}
\newcommand{\cN}{\mathcal{N}}

\newcommand{\cW}{\mathcal{W}}

\newcommand{\GE}{\mathrm{GE}}
\newcommand{\CGE}{\mathrm{CGE}}
\newcommand{\MLSI}{\mathrm{MLSI}}
\newcommand{\CLSI}{\mathrm{CLSI}}
\newcommand{\id}{\mathrm{id}}

\newcommand{\Ent}{\mathrm{Ent}}

\newcommand{\IC}{\mathbb{C}}
\newcommand{\IG}{\mathbb{G}}
\newcommand{\IR}{\mathbb{R}}
\newcommand{\IZ}{\mathbb{Z}}
\newcommand{\IN}{\mathbb{N}}

\newcommand{\Pol}{\mathrm{Pol}}
\newcommand{\Irr}{\mathrm{Irr}}

\newcommand{\norm}[1]{\| {#1}\|}
\newcommand{\abs}[1]{\lvert#1\rvert}

\newcommand{\ran}{\operatorname{ran}}

\newcommand{\F}{\mathcal{F}}

\theoremstyle{plain}
\newtheorem{theorem}{Theorem}[section]
\newtheorem{corollary}[theorem]{Corollary}
\newtheorem{lemma}[theorem]{Lemma}
\newtheorem{proposition}[theorem]{Proposition}

\newtheorem{definition}[theorem]{Definition}

\theoremstyle{remark}
\newtheorem{remark}[theorem]{Remark}
\newtheorem{example}[theorem]{Example}

\newtheorem*{claim*}{Claim}
\newtheorem*{remark*}{Remark}
\newtheorem*{example*}{Example}
\newtheorem*{notation*}{Notation}
\numberwithin{equation}{section}

\begin{document}

\title{Complete gradient estimates of quantum Markov semi\-groups}

\author{Melchior Wirth \and Haonan Zhang}
\address{Institute of Science and Technology Austria (IST Austria),
Am Campus 1, 3400 Klosterneuburg, Austria}
\email{melchior.wirth@ist.ac.at}
\email{haonan.zhang@ist.ac.at}


\begin{abstract}
In this article we introduce a complete gradient estimate for symmetric quantum Markov semigroups on von Neumann algebras equipped with a normal faithful tracial state, which implies semi-convexity of the entropy with respect to the recently introduced noncommutative $2$-Wasserstein distance. We show that this complete gradient estimate is stable under tensor products and free products and establish its validity for a number of examples. As an application we prove a complete modified logarithmic Sobolev inequality with optimal constant for Poisson-type semigroups on free group factors.
\end{abstract}

\maketitle

\section{Introduction}

In the last decades, the theory of optimal transport has made impressive inroads into other disciplines of mathematics, probably most notably with the Lott--Sturm--Villani theory \cite{LV09,Stu06a,Stu06b} of synthetic Ricci curvature bounds for metric measure spaces. More recently, optimal transport techniques have also been used to extend this theory to cover also discrete \cite{CHLZ12,EM12,Maa11,Mie11} and noncommutative geometries \cite{CM14,CM17a,MM17}. 

The starting point of our investigation are the results from \cite{CM14,CM17a} and their partial generalizations to the infinite-dimensional case in \cite{Hor18,Wir18}. For a symmetric quantum Markov semigroup $(\cP_t)$ the authors construct a noncommutative version of the $2$-Wasserstein metric, which allows to obtain a quantum analog of the characterization \cite{JKO98,Ott01} of the heat flow as $2$-Wasserstein gradient flow of the entropy. On this basis, the geodesic semi-convexity of the entropy in noncommutative $2$-Wasserstein space can be understood as a lower Ricci curvature bound for the quantum Markov semigroup, and it can be used to obtain a series of prominent functional inequalities such as a Talagrand inequality, a modified logarithmic Sobolev inequality and the Poincaré inequality \cite{BGJ20,CM17a,JLL20,RD19}.

One of the major challenges in the development of this program so far has been to verify semi-convexity in concrete examples, and only few noncommutative examples have been known to date, even less infinite-dimensional ones.

To prove geodesic semi-convexity, the gradient estimate
\begin{equation}\label{eq:GE}
\norm{\partial \cP_t a}_\rho^2\leq e^{-2Kt}\norm{\partial a}_{\cP_t \rho}^2,\tag{GE}
\end{equation}
or, equivalently, its integrated form has proven central. They can be seen as noncommutative analogs of the Bakry--Émery gradient estimate and the $\Gamma_2$ criterion. Indeed, if the underlying quantum Markov semigroup is the heat semigroup on a complete Riemannian manifold, (\ref{eq:GE}) reduces to the classical Bakry--Émery estimate
\begin{equation*}
\Gamma(\cP_t f)\leq e^{-2Kt}\cP_t \Gamma(f).
\end{equation*}

As often in noncommutative geometry, tensorization of inequalities is more difficult than that in the commutative case. It is not known whether the gradient estimate in the form (\ref{eq:GE}) has good tensorization properties. For this reason we introduce $(\CGE)$, a complete version of (\ref{eq:GE}), and establish some of its stability properties. Using these in combination with a variant of the intertwining technique from \cite{CM17a} and a fine analysis of specific generators of Lindblad type, we are able to establish this tensor stable gradient estimate $(\CGE)$ for a number of examples for which geodesic convexity was not known before.

Let us briefly outline the content of the individual sections of this article. In Section \ref{sec:basics} we recall some basics of quantum Markov semigroups, the construction of the noncommutative transport distance $\cW$ and the connection between the gradient estimate (\ref{eq:GE}) and the geodesic semi-convexity of the entropy.

In Section \ref{sec:intertwining} we extend the intertwining technique from \cite{CM17a,CM20} to the infinite-dimensional setting. Working with arbitrary operator means, our result does not only cover the gradient estimate implying semi-convexity of the entropy in noncommutative $2$-Wasserstein space, but also the noncommutative Bakry--Émery estimate studied in \cite{JZ15a}. As  examples we show that the Ornstein--Uhlenbeck semigroup on the mixed $q$-Gaussian algebras satisfies ($\CGE$) with constant $K=1$, the heat semigroup on quantum tori satisfies ($\CGE$) with constant $K=0$, and that a class of quantum Markov semigroups on discrete group von Neumann algebras and quantum groups $O_N^+,S_N^+$ satisfy ($\CGE$) with constant $K=0$. Moreover, this intertwining result is also central for the stability properties studied in the next section.

In Section \ref{sec:stability} we show that the complete gradient estimate  is stable under tensor products and free products of quantum Markov semigroups. Besides the applications investigated later in the article, these results also open the door for applications of group transference to get complete gradient estimates for Lindblad generators on matrix algebras.

In Section \ref{sec:com_proj} we prove the complete gradient estimate ($\CGE$) with constant $K=1$ for quantum Markov semigroups whose generators are of the form 
\begin{equation*}
\cL x=\sum_j p_j x+xp_j -2p_j x p_j, 
\end{equation*}
where the operators $p_j$ are commuting projections. In a number of cases, this result is better than the ones we could obtain by intertwining and yields the optimal constant in the gradient estimate. As examples we show that this result applies to the quantum Markov semigroups associated with the word length function on finite cyclic groups and the non-normalized Hamming length function on symmetric groups. Using ultraproducts and the stability under free products, we finally extend this result to Poisson-type quantum Markov semigroups on group von Neumann algebras of groups $\IZ^{\ast k}\ast \IZ_2^{\ast l}$ with $k,l\geq 0$. In particular, this implies the complete modified logarithmic Sobolev inequality with optimal constant for these groups.

\subsection*{Note added.} When preparing this preprint for submission, we were made aware that several of the examples have been obtained independently by Brannan, Gao and Junge (see \cite{BGJ20,BGJ20b}).

\subsection*{Acknowledgments} H.Z. is supported by the European Union's Horizon 2020 research and innovation programme under the Marie Sk\l odowska-Curie grant agreement No. 754411. M.W. was supported by the Austrian Science Fund (FWF) through grant number F65. Both authors would like to thank Jan Maas for fruitful discussions and helpful comments.

\section{\texorpdfstring{The noncommutative transport metric $\W$ and geodesic convexity of the entropy}{The noncommutative transport metric W and geodesic convexity of the entropy}}\label{sec:basics}

In this section we briefly recall the definition and basic properties of the noncommutative transport distance $\W$ associated with a tracially symmetric quantum Markov semigroup. For a more detailed description we refer readers to \cite{Wir18}.

Let $\cM$ be a separable von Neumann algebra equipped with a normal faithful tracial state $\tau\colon \cM\to \IC$. Denote by $\mathcal{M}_+$ the positive cone of $\mathcal{M}$. Given $1\le p<\infty$, we define
$$\Vert x\Vert_p=[\tau(|x|^p)]^{\frac{1}{p}},~~x\in\mathcal{M},$$
where $|x|=(x^*x)^{\frac{1}{2}}$ is the modulus of $x$. One can show that $\|\cdot \|_p$ is a norm on $\mathcal{M}$.
The completion of $( \mathcal{M}, \|\cdot \|_p )$ is denoted by $L^p (\mathcal{M}, \tau)$, or simply $L^p (\mathcal{M})$. As usual, we put $L^\infty(\cM)=\cM$ with the operator norm. In this article, we are only interested in $p=1$ and $p=2$. In particular, $L^2(\cM)$ is a Hilbert space with the inner product
$$\langle x,y\rangle_2=\tau(x^*y).$$


 A family $(\cP_t)_{t\geq 0}$ of bounded linear operators on $\cM$ is called a \emph{quantum Markov semigroup (QMS)} if 
\begin{itemize}
\item $\cP_t$ is a normal unital completely positive map for every $t\geq 0$,
\item $\cP_s \cP_t=\cP_{s+t}$ for all $s,t\geq 0$,
\item $\cP_t x\to x$ in the weak$^\ast$ topology as $t\searrow 0$ for every $x\in \cM$.
\end{itemize}
A QMS $(\cP_t)$ is called \emph{$\tau$-symmetric} if
\begin{equation*}
\tau((\cP_t x)y)=\tau(x \cP_t y)
\end{equation*}
for all $x,y\in \cM$ and $t\geq 0$.

The generator of $(\cP_t)$ is the operator $\cL$ given by
\begin{align*}
D(\cL)&=\left\{x\in \cM\mid \lim_{t\searrow 0}\frac1 t (x-\cP_t x)\text{ exists in the weak$^\ast$ topology}\right\},\\
\cL(x)&=\lim_{t\to 0}\frac1 t (x-\cP_t x),~~x\in D(\cL).
\end{align*}
Here and in what follows, $D(T)$ always means the domain of $T$.
For all $p\in [1,\infty]$, the $\tau$-symmetric QMS $(\cP_t)$ extends to a strongly continuous contraction semigroup $(\cP_t^{(p)})$ on $L^p(\cM,\tau)$ with generator $\cL_p$.

Let $\cC=D(\cL_2^{1/2})\cap \cM$, which is a $\sigma$-weakly dense $\ast$-subalgebra of $\cM$ \cite[Proposition 3.4]{DL92}. According to \cite[Section 8]{CS03}, there exists an (essentially unique) quintuple $(\cH, L, R, \cJ, \partial)$ consisting of a Hilbert space $\cH$, commuting non-degenerate $^\ast$-homomorphisms $L\colon \cC\to B(\cH)$, $R\colon \cC^\circ\to B(\cH)$, an anti-unitary involution $\cJ\colon \cH\to \cH$ and a closed operator $\partial\colon D(\cL_2^{1/2})\to \cH$ such that
\begin{itemize}
\item $\{R(x)\partial y\mid x,y\in \cC\}$ is dense in $\cH$,
\item $\partial(xy)=L(x)\partial y+R(y)\partial x$ for $x,y\in \cC$,
\item $\cJ(L(x)R(y)\partial(z))=L(y^\ast)R(x^\ast)\partial(z^\ast)$ for $x,y,z\in \cC$,
\item $\cL_2=\partial^\dagger \partial$, 
\end{itemize}
where $\cC^\circ$ is the opposite algebra of $\cC$. We will write $a\xi$ and $\xi b$ for $L(a)\xi$ and $R(b)\xi$, respectively.
%

For $x,y\in D(\cL_2^{1/2})$, the \emph{carré du champ} is defined as
\begin{equation*}
\Gamma(x,y)\colon \cC\to\IC,\,\Gamma(x,y)(z)=\langle \partial x,(\partial y)z\rangle_\cH.
\end{equation*}
We write $\Gamma(x)$ to denote $\Gamma(x,x)$.

A $\tau$-symmetric QMS is called \emph{$\Gamma$-regular} (see \cite{JLL20}) if the representations $L$ and $R$ are normal.
Under this assumption, $\cH$ is a \emph{correspondence} from $\cM$ to itself in the sense of Connes \cite[Appendix B of Chapter 5]{Con94} (sometimes also called \emph{$\cM$-bimodule} or \emph{Hilbert bimodule}). By \cite[Theorem 2.4]{Wir18}, $(\cP_t)$ is a $\Gamma$-regular semigroup if and only if $\Gamma(x,y)$ extends to a normal linear functional on $\cM$ for all $x,y\in D(\cL^{1/2}_2)$. By a slight abuse of notation, we write $\Gamma(x,y)$ for the unique element $h\in L^1(\cM,\tau)$ such that
\begin{align*}
\tau(z h)=\Gamma(x,y)(z)
\end{align*}
for all $z\in \cC$.

If $(\cP_t)$ is $\Gamma$-regular, then we can extend $L$ to a map on the operators affiliated with $\cM$ by defining
\begin{align*}
L(x)=u\int_{[0,\infty)}\lambda\,d(L\circ e)(\lambda),
\end{align*}
for any operator $x$ affiliated with $\cM$, where $u$ is the partial isometry in the polar decomposition of $x$ and $e$ is the spectral measure of $\abs{x}$. The same goes for $R$.

Let $\Lambda$ be an  \emph{operator mean} in the sense of Kubo and Ando \cite{KA80}, that is, $\Lambda$ is a map from $B(\cH)_+\times B(\cH)_+$ to $B(\cH)_+$ satisfying 
\begin{enumerate}[(a)]
\item if $A\leq C$ and $B\leq D$, then $\Lambda(A,B)\leq\Lambda(C,D)$,
\item the \emph{transformer inequality}: $C \Lambda(A,B)C\leq \Lambda(C A C,C B C)$ for all $A,B,C\in B(\cH)_+$,
\item if $A_n\searrow A$, $B_n\searrow B$, then $\Lambda(A_n,B_n)\searrow \Lambda(A,B)$,
\item $\Lambda(\id_{\cH},\id_{\cH})=\id_{\cH}$.
\end{enumerate}
Here and in what follows, by $A_n\searrow A$ we mean $A_1\ge A_2\ge \cdots$ and  $A_n$ converges strongly to $A$. From (b), any operator mean $\Lambda$ is \emph{positively homogeneous}:
$$\Lambda(\lambda A,\lambda B)=\lambda\Lambda (A,B),~~\lambda >0,A,B\in B(\cH)_+.$$
An  operator mean $\Lambda$ is \emph{symmetric} if $\Lambda(A,B)=\Lambda(B,A)$ for all $A,B\in B(\cH)_+$. 

For a positive self-adjoint operator $\rho$ affiliated with $\cM$, we define
\begin{equation*}
\hat \rho=\Lambda(L(\rho), R(\rho)).
\end{equation*}
Of particular interest for us are the cases when $\Lambda$ is the \emph{logarithmic mean}
\begin{equation*}
\Lambda_{\text{log}}(L(\rho), R(\rho))=\int_{0}^{1}L(\rho)^s R(\rho)^{1-s}\,ds,
\end{equation*}
or the \emph{arithmetic mean}
\begin{equation*}
\Lambda_{\text{ari}}(L(\rho), R(\rho))=\frac{L(\rho)+R(\rho)}{2}.
\end{equation*}

We write $\norm{\cdot}_\rho^2$ for the quadratic form associated with $\hat \rho$, that is,
\begin{align*}
\norm{\xi}_\rho^2=\begin{cases}\norm{\hat \rho^{1/2}\xi}_\cH^2&\text{if }\xi\in D(\hat \rho^{1/2}),\\
\infty&\text{otherwise}.\end{cases}
\end{align*}
Given an operator mean $\Lambda$, consider the set
\begin{align*}
\cA_\Lambda=\{a\in D(\cL_2^{1/2})\cap\cM\mid \exists C>0\,\forall \rho\in L^1_+(\cM,\tau)\colon \norm{\partial a}_\rho^2\leq C\norm{\rho}_1\},
\end{align*}
equipped with the seminorm
\begin{align*}
\norm{a}_\Lambda^2=\sup_{0\neq \rho\in L^1_+(\cM,\tau)}\frac{\norm{\partial a}_\rho^2}{\norm{\rho}_1}.
\end{align*}
If $\Lambda$ is the arithmetic mean $\Lambda_{\text{ari}}$, then this set coincides with
\begin{align*}
\cA_\Gamma=\{x\in D(\cL_2^{1/2})\cap\cM\mid \Gamma(x),\Gamma(x^\ast)\in \cM\}.
\end{align*}
In fact, when $\Lambda=\Lambda_{\text{ari}}$, one has $\norm{\partial a}_{\rho}^2=\frac{1}{2}\tau\left((\Gamma(a)+\Gamma(a^*))\rho\right)$. 
If the operator mean $\Lambda$ is symmetric, then it is dominated by the arithmetic mean and therefore $\cA_\Gamma\subset \cA_\Lambda $ \cite[Theorem 4.5]{KA80},\cite[Lemma 3.24]{Wir18}. The following definition states that this inclusion is dense in a suitable sense.

\begin{definition}\label{def:regular_mean}
The operator mean $\Lambda$ is a \emph{regular mean} for $(\cP_t)$ if for every $x\in \cA_\Lambda$ there exists a sequence $(x_n)$ in $\cA_\Gamma$ that is bounded in $\A_\Lambda$ and converges to $x$ $\sigma$-weakly.
\end{definition}
Of course the arithmetic mean is always regular. In general it seems not easy to check this definition directly, but we will discuss a sufficient condition below.

Given an operator mean $\Lambda$, let $\cH_\rho$ be the Hilbert space obtained from $\partial(\cA_\Lambda)$ after separation and completion with respect to $\langle\cdot,\cdot\rangle_\rho$ defined by 
\begin{equation*}
\langle\xi,\eta\rangle_\rho=\langle\hat\rho^{1/2}\xi,\hat{\rho}^{1/2}\eta\rangle_{\cH}.
\end{equation*}
 If $\Lambda$ is regular, then $\partial(\cA_\Gamma)$ is dense in $\cH_\rho$.

Let $\cD(\cM,\tau)$ be the set of all \emph{density operators}, that is,
\begin{equation*}
\cD(\cM,\tau)=\{\rho\in L^1_+(\cM,\tau)\mid \tau(\rho)=1\}.
\end{equation*}

\begin{definition}\label{def:admissible_curve}
Fix an operator mean $\Lambda$. A curve $(\rho_t)_{t\in [0,1]}\subset \cD(\cM,\tau)$ is \emph{admissible} if 
\begin{itemize}
\item the map $t\mapsto \tau(a\rho_t)$ is measurable for all $a\in \cA_\Gamma$,
\item there exists a curve $(\xi_t)_{t\in [0,1]}$ such that $\xi_t\in \cH_{\rho_t}$ for all $t\in [0,1]$, the map $t\mapsto \langle \partial a,\xi_t\rangle_{\rho_t}$ is measurable for all $a\in \cA_\Gamma$ and for every $a\in \cA_\Gamma$ one has
\begin{equation}\label{eq:CE}
\frac{d}{dt}\tau(a\rho_t)=\langle \xi_t,\partial a\rangle_{\rho_t}
\end{equation}
for a.e. $t\in [0,1]$.
\end{itemize}
\end{definition}
For an admissible curve $(\rho_t)$, the vector field $(\xi_t)$ is uniquely determined up to equality a.e. and will be denoted by $(D\rho_t)$. If $\Lambda$ is a regular mean, the set $\cA_\Gamma$ can be replaced by $\cA_\Lambda$ everywhere in Definition \ref{def:admissible_curve}.

\begin{remark}
The equation (\ref{eq:CE}) is a weak formulation of 
\begin{equation*}
\dot\rho_t=\partial^\dagger (\hat\rho_t \xi_t),
\end{equation*}
which can be understood as noncommutative version of the continuity equation. Indeed, if $(\cP_t)$ is the heat semigroup on a compact Riemannian manifold, it reduces to the classical continuity equation $\dot\rho_t+\operatorname{div}(\rho_t \xi_t)=0$.
\end{remark}

\begin{definition}
The noncommutative transport distance $\cW$ on $\cD(\cM,\tau)$ is defined as
\begin{equation*}
\cW(\bar\rho_0,\bar\rho_1)=\inf_{(\rho_t)}\int_0^1 \norm{D\rho_t}_{\rho_t}\,dt,
\end{equation*}
where the infimum is taken over all admissible curves $(\rho_t)$ connecting $\bar \rho_0$ and $\bar \rho_1$.
\end{definition}

\begin{definition}\label{defn:CGE}
Let $K\in \IR$. A $\Gamma$-regular QMS $(\cP_t)$ is said to satisfy the gradient estimate $\GE(K,\infty)$ if 
\begin{equation*}
\norm{\partial \cP_t a}_\rho^2\leq e^{-2Kt}\norm{\partial a}_{\cP_t \rho}^2
\end{equation*}
for $t\geq 0$, $a\in D(\cL_2^{1/2})$ and $\rho\in \cD(\cM,\tau)$.

It satisfies $\CGE(K,\infty)$ if $(\cP_t\otimes \id_{\cN})$ satisfies $\GE(K,\infty)$ for any finite von Neumann algebra $\cN$.
\end{definition}

Note that the gradient estimate $\GE(K,\infty)$ depends implicitly on the chosen operator mean $\Lambda$. As observed in \cite[Proposition 6.12]{Wir18}, if $(\cP_t)$ satisfies $\GE(K,\infty)$ for the arithmetic mean $\Lambda_{\text{ari}}$ and for $\Lambda$, then $\Lambda$ is regular for $(\cP_t)$. If $\Lambda$ is the right trivial mean, i.e., $\Lambda(L(\rho),R(\rho))=R(\rho)$, then $\GE(K,\infty)$ reduces to the Bakry--Émery criterion
\begin{equation*}
\Gamma(\cP_t a)\leq e^{-2Kt}\cP_t\Gamma(a),
\end{equation*}
which was considered in \cite{JZ15a}.

\begin{remark}
Recently, Li, Junge and LaRacuente \cite{JLL20} introduced a closely related notion of lower Ricci curvature bound for quantum Markov semigroups, the \emph{geometric Ricci curvature condition} (see also \cite[Definition 3.22]{BGJ20}). Like $\CGE$, this condition is tensor stable, and it implies $\CGE$ for arbitrary operator means \cite[Theorem 3.6]{JLL20} (the result is only formulated for the logarithmic mean, but the proof only uses the transformer inequality for operator means).

In the opposite direction, the picture is less clear. For $\GE$, a direct computation on the two-point graph shows that the optimal constant depends on the mean in general. It seems reasonable to expect the same behavior for $\CGE$, which would imply that the optimal constant in $\CGE$ for a specific mean is in general bigger than the optimal constant in the geometric Ricci curvature condition.
\end{remark}

This gradient estimate is closely related to convexity properties of the logarithmic entropy
\begin{equation*}
\Ent\colon \cD(\cM,\tau)\to [0,\infty],\,\Ent(\rho)=\tau(\rho\log \rho).
\end{equation*}
As usual let $D(\Ent)=\{\rho\in \cD(\cM,\tau)\mid \Ent(\rho)<\infty\}$.

\begin{theorem}[{\cite[Theorem 7.12]{Wir18}}]\label{thm:geodesic_convex}
Assume that $(\cP_t)$ is a $\Gamma$-regular QMS. Suppose that $\Lambda=\Lambda_{\log}$ is the logarithmic mean and is regular for $(\cP_t)$. If $(\cP_t)$ satisfies $\GE(K,\infty)$, then
\begin{enumerate}[(a)]
\item for every $\rho\in D(\Ent)$ the curve $(\cP_t \rho)$ satisfies the \emph{evolution variational inequality} $(\mathrm{EVI}_K)$
\begin{equation*}
\frac{d}{dt}\frac 1 2\cW^2(\cP_t \rho,\sigma)+\frac K 2 \cW^2(\cP_t \rho,\sigma)+\Ent(\rho)\leq \Ent(\sigma)
\end{equation*}
for a.e. $t\geq 0$ and $\sigma \in \cD(\cM,\tau)$ with $\cW(\rho,\sigma)<\infty$,
\item any $\rho_0,\rho_1\in D(\Ent)$ with $\cW(\rho_0,\rho_1)<\infty$ are connected by a $\cW$-geodesic and $\Ent$ is $K$-convex along any constant speed $\cW$-geodesic $(\rho_t)$, that is, $\frac{d^2}{dt^2}\Ent(\rho_t)\geq K$ in the sense of distributions.
\end{enumerate}
\end{theorem}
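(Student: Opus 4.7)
The plan is to reduce to part (a), since once $\mathrm{EVI}_K$ is established, part (b) follows from the general Ambrosio--Gigli--Savaré theory of gradient flows in metric spaces: $\mathrm{EVI}_K$ automatically yields existence of constant speed geodesics connecting any two points of finite $\cW$-distance inside $D(\Ent)$, together with $K$-convexity of $\Ent$ along any such geodesic. So the work lies entirely in deducing $\mathrm{EVI}_K$ for the heat flow $(\cP_t \rho)$ from $\GE(K,\infty)$. Two ingredients are needed: an \emph{action contraction} along the semigroup, and the identification of the entropy dissipation along $(\cP_t \rho)$ with the Fisher information, which is where the logarithmic mean enters crucially.

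For the action contraction, let $(\rho_t)_{t \in [0,1]}$ be an admissible curve with velocity field $(\xi_t)$ in the sense of Definition \ref{def:admissible_curve}, and fix $s \geq 0$. I would show that $(\cP_s \rho_t)_{t \in [0,1]}$ is itself admissible, with a velocity field $\eta_t^s$ satisfying $\norm{\eta_t^s}_{\cP_s \rho_t}^2 \leq e^{-2Ks}\norm{\xi_t}_{\rho_t}^2$. The proof is by duality: testing \eqref{eq:CE} against $\cP_s a$ for $a \in \cA_\Gamma$ yields $\frac{d}{dt}\tau(a\,\cP_s \rho_t) = \langle \xi_t, \partial \cP_s a\rangle_{\rho_t}$, and $\GE(K,\infty)$ bounds the right-hand side by $e^{-Ks}\norm{\xi_t}_{\rho_t}\norm{\partial a}_{\cP_s \rho_t}$. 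Riesz representation in $\cH_{\cP_s \rho_t}$ then produces $\eta_t^s$. Integrating in $t$ and minimizing over admissible connections gives the contraction $\cW(\cP_s \rho_0, \cP_s \rho_1) \leq e^{-Ks}\cW(\rho_0, \rho_1)$.

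For the entropy dissipation, I would exploit the distinguishing algebraic property of the logarithmic mean, namely the noncommutative chain rule
\begin{equation*}
\hat \rho\,\partial \log \rho = \int_0^1 L(\rho)^s R(\rho)^{1-s}\,ds \cdot \partial \log \rho = \partial \rho,
\end{equation*}
valid on a suitable core (verifiable on polynomial functional calculi and then passed to the closure). This identifies $\partial \log \cP_t \rho$ as the velocity field of the admissible curve $t \mapsto \cP_t \rho$ and delivers the de Bruijn-type identity $-\frac{d}{dt}\Ent(\cP_t \rho) = \norm{\partial \log \cP_t \rho}_{\cP_t \rho}^2 =: I(\cP_t \rho)$. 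The regularity hypothesis on $\Lambda_{\log}$ is used here to ensure that $\cA_\Gamma$ is suitably dense in $\cA_\Lambda$ for $\Lambda = \Lambda_{\log}$, so that the test-function arguments against $\cP_s a$ with $a \in \cA_\Gamma$ actually saturate the Hilbert norm $\norm{\cdot}_\rho$ built from the logarithmic mean.

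The final step couples the two ingredients in a Daneri--Savaré style argument. Fix $\sigma \in \cD(\cM,\tau)$ with $\cW(\cP_t \rho,\sigma) < \infty$, take an almost optimal admissible curve $(\mu_s^\varepsilon)_{s\in [0,1]}$ from $\cP_t \rho$ to $\sigma$, and compare it to its heat-flow perturbation $(\cP_h \mu_s^\varepsilon)$ to extract an upper bound for $\frac{d^+}{dt}\tfrac12 \cW^2(\cP_t \rho,\sigma)$. The action contraction contributes the prefactor $e^{-2Kh}$ that produces the $\tfrac{K}{2}\cW^2$ term, while integration by parts in the cross term, using the chain-rule identification of the velocity field along the transverse heat flow, produces $\Ent(\sigma) - \Ent(\cP_t \rho)$; rearrangement yields $\mathrm{EVI}_K$. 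The main obstacle throughout is analytic rather than strategic: the Hilbert spaces $\cH_\rho$ vary with $\rho$, admissible curves are only a.e.\ differentiable, and the chain rule for $\log$ requires approximation by regularized densities such as $\cP_\varepsilon \rho + \varepsilon \mathbf{1}$ and careful commutation of the $\varepsilon \to 0$ limit with the Riemannian-type differentiation. Rigorously pushing these passages through, as carried out in \cite{Wir18}, is the real content of the proof.
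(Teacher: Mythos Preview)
The paper does not actually prove this theorem: it is stated with the citation \cite[Theorem 7.12]{Wir18} and no proof is given, as the result is imported wholesale from \cite{Wir18} as background in Section~\ref{sec:basics}. There is therefore no ``paper's own proof'' to compare your proposal against.

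That said, your outline is a faithful high-level summary of the Daneri--Savar\'e strategy that underlies the proof in \cite{Wir18}: action contraction along the semigroup derived from $\GE(K,\infty)$ by duality, the chain-rule identity $\hat\rho\,\partial\log\rho=\partial\rho$ specific to the logarithmic mean to identify the velocity field of $t\mapsto\cP_t\rho$, and the coupling of these via heat-flow interpolation of an almost-optimal curve to produce $\mathrm{EVI}_K$, with (b) then following from general metric gradient-flow theory. You also correctly flag where the regularity hypothesis on $\Lambda_{\log}$ enters and that the genuine difficulties are the analytic approximation arguments (variable Hilbert spaces $\cH_\rho$, regularization $\cP_\varepsilon\rho+\varepsilon\mathbf{1}$, passage to limits), which you defer to \cite{Wir18}. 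As a sketch this is sound; just be aware that in this paper the theorem is quoted, not proved, so your proposal is really a summary of the external reference rather than a reconstruction of anything in the present text.
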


This gradient flow characterization implies a number of functional inequalities for the QMS, see e.g. \cite[Section 8]{CM17a}, \cite[Section 7]{Wir18}, \cite[Section 11]{CM20}. Here we will focus on the modified logarithmic Sobolev inequality and its complete version (see \cite[Definition 2.8]{GJL18}, \cite[Definition 2.12]{JLL20} for the latter).

For $\rho,\sigma\in \cD(\cM,\tau)$ the \emph{relative entropy} of $\rho$ with respect to $\sigma$ is defined as
\begin{equation*}
\Ent(\rho\Vert \sigma)=\begin{cases}\tau(\rho\log \rho)-\tau(\rho\log\sigma)&\text{if }\supp \rho\subset \supp \sigma,\\
\infty&\text{otherwise}.\end{cases}
\end{equation*}
If $\cN\subset \cM$ is a von Neumann subalgebra with $E\colon \cM\to \cN$ being the conditional expectation, then we define
\begin{align*}
\Ent_\cN(\rho)=\Ent(\rho\lVert E(\rho)).
\end{align*}
Recall that a \emph{conditional expectation} $E\colon\cM\to\cN$ is a normal contractive positive projection from $\cM$ onto $\cN$ which preserves the trace and satisfies
\begin{equation*}
E(axb)=aE(x)b,~~a,b\in\cN, x\in\cM.
\end{equation*}

For $x\in D(\cL_2^{1/2})\cap \cM_+$ the \emph{Fisher information} is defined as
\begin{equation*}
\cI(x)=\lim_{\epsilon\searrow 0}\langle \cL_2^{1/2} x,\cL_2^{1/2}\log(x+\epsilon)\rangle_2\in [0,\infty].
\end{equation*}
This definition can be extended to $x\in L^1_+(\cM,\tau)$ by setting
\begin{equation*}
\cI(x)=\begin{cases}\lim_{n\to\infty}\cI(x\wedge n)&\text{if }x\wedge n\in D(\cL_2^{1/2})\cap \cM\text{ for all }n\in\IN,\\
\infty&\text{otherwise}.\end{cases}
\end{equation*}

Recall that the fixed-point algebra of $(P_t)$ is
\begin{equation*}
\cM^{\mathrm{fix}}=\{x\in\cM:\cP_t(x)=x\text{ for all }t\geq 0\}.
\end{equation*}
It is a von Neumann subalgebra of $\cM$ \cite[Proposition 3.5]{DL92}.

\begin{definition}
Let $(\cP_t)$ be a $\Gamma$-regular QMS with the fixed-point subalgebra $\cM^{\mathrm{fix}}$. For $\lambda>0$, we say that $(\cP_t)$ satisfies the modified logarithmic Sobolev inequality with constant $\lambda$ $(\MLSI(\lambda))$, if
\begin{equation*}
\lambda \Ent_{\cM^{\mathrm{fix}}}(\rho)\leq \cI(\rho)
\end{equation*}
for $\rho\in\cD(\cM,\tau)\cap D(\cL_2^{1/2})\cap \cM$.

We say that $(\cP_t)$ satisfies the complete modified logarithmic Sobolev inequality with constant $\lambda$ $(\CLSI(\lambda))$ if $(\cP_t\otimes \id_\cN)$ satisfies the modified logarithmic Sobolev inequality with constant $\lambda$ for any finite von Neumann algebra $\cN$.
\end{definition}

For ergodic QMS satisfying $\GE(K,\infty)$, the inequality $\MLSI(2K)$ is essentially contained in the proof of \cite[Proposition 7.9]{Wir18}. Since $(\cP_t\otimes \id_\cN)$ is not ergodic (unless $\cN=\IC$), this result cannot imply the complete modified logarithmic Sobolev inequality. However, the modified logarithmic Sobolev inequality for non-ergodic QMS can also still be derived from the gradient flow characterization, as we will see next.

\begin{corollary}\label{cor:MLSI}
Assume that $(\cP_t)$ is a $\Gamma$-regular QMS. Suppose that $\Lambda=\Lambda_{\log}$ is the logarithmic mean and is regular for $(\cP_t)$. If $(\cP_t)$ satisfies $\GE(K,\infty)$, then it satisfies 
\begin{equation*}
\cI(\cP_t \rho)\leq e^{-2Kt}\cI(\rho)
\end{equation*}
for $\rho \in D(\cL_2^{1/2})\cap\cM_+$ and $t\geq 0$.

Moreover, if $K>0$, then $(\cP_t)$ satisfies $\MLSI(2K)$. The same is true for the complete gradient estimate and the complete modified  logarithmic Sobolev inequality.
\end{corollary}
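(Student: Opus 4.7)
The plan is to establish the Fisher information decay
\begin{equation*}
\cI(\cP_t\rho)\le e^{-2Kt}\cI(\rho)
\end{equation*}
first, using $\GE(K,\infty)$ in a Legendre-duality argument, and then integrate this bound against the de Bruijn identity $\ddt\Ent(\cP_t\rho)=-\cI(\cP_t\rho)$ to obtain the $\MLSI$. The complete statements will follow by running the same proof on the amplified semigroup $\cP_t\otimes\id_\cN$, which inherits $\GE(K,\infty)$ from $\CGE(K,\infty)$ (as well as $\Gamma$-regularity and regularity of the logarithmic mean) so that the hypotheses of Theorem \ref{thm:geodesic_convex} transfer.

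For the Fisher decay, the starting point is the chain rule $\hat\rho\,\partial\log\rho=\partial\rho$, which is special to the logarithmic mean; it is obtained by expanding $\rho=\exp(\log\rho)$ inside the derivation $\partial$. It yields $\cI(\rho)=\langle\partial\log\rho,\partial\rho\rangle_\cH=\|\partial\log\rho\|_\rho^2$, and Cauchy--Schwarz in $\|\cdot\|_\rho$ combined with Young's inequality then gives the one-sided Legendre bound
\begin{equation*}
2\langle\partial a,\partial\rho\rangle_\cH-\lambda\|\partial a\|_\rho^2\le\lambda^{-1}\cI(\rho),\qquad a\in\cA_\Gamma,\ \lambda>0,
\end{equation*}
with equality at $a=\lambda^{-1}\log\rho$. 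Starting from the equality case with $\rho$ replaced by $\cP_t\rho$ and $\lambda=1$, moving $\cP_t$ into the first entry of $\langle\partial a,\partial\cP_t\rho\rangle_\cH$ via $\tau$-symmetry, and bounding the second entry from below by $\GE(K,\infty)$ in the form $\|\partial a\|_{\cP_t\rho}^2\ge e^{2Kt}\|\partial\cP_t a\|_\rho^2$ produces
\begin{equation*}
\cI(\cP_t\rho)\le 2\langle\partial\cP_t a,\partial\rho\rangle_\cH-e^{2Kt}\|\partial\cP_t a\|_\rho^2
\end{equation*}
for every $a\in\cA_\Gamma$ attaining the original supremum. Applying the Legendre bound to the right-hand side with $\lambda=e^{2Kt}$ then yields $\cI(\cP_t\rho)\le e^{-2Kt}\cI(\rho)$.

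Granted the Fisher decay, integrating $-\ddt\Ent(\cP_s\rho)=\cI(\cP_s\rho)\le e^{-2Ks}\cI(\rho)$ from $0$ to $T$ gives
\begin{equation*}
\Ent(\rho)-\Ent(\cP_T\rho)\le\frac{1-e^{-2KT}}{2K}\cI(\rho).
\end{equation*}
Letting $T\to\infty$, the $\tau$-symmetry of $(\cP_t)$ together with the mean ergodic theorem provides $\cP_t\rho\to E(\rho)$, where $E\colon\cM\to\cM^{\mathrm{fix}}$ is the trace-preserving conditional expectation. The identity $\Ent(\rho\lVert E(\rho))=\Ent(\rho)-\Ent(E(\rho))$, which follows from $\log E(\rho)\in\cM^{\mathrm{fix}}$ and $\tau\circ E=\tau$, combined with the lower semicontinuity of $\Ent$, then yields $2K\,\Ent_{\cM^{\mathrm{fix}}}(\rho)\le\cI(\rho)$. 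The corresponding $\CLSI(2K)$ is the verbatim argument applied to $(\cP_t\otimes\id_\cN)$.

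The main technical obstacle is the rigorous justification, in infinite-dimensional generality, of the chain rule $\hat\rho\,\partial\log\rho=\partial\rho$ and the de Bruijn identity, since $\log\rho$ is unbounded whenever the spectrum of $\rho$ accumulates at $0$. Both will be handled by the standard regularization $\rho\rightsquigarrow\rho+\varepsilon\one$ and a truncation of $\log$, invoking the monotone/dominated convergence properties of operator means and the lower semicontinuity of $\cI$ and $\Ent$, following the scheme of \cite[Section 7]{Wir18}.
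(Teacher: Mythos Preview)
Your argument is correct (with the acknowledged regularization details), but it follows a genuinely different route from the paper. The paper does not prove Fisher decay directly; instead it invokes Theorem~\ref{thm:geodesic_convex} to obtain that $(\cP_t\rho)$ is an $\mathrm{EVI}_K$ gradient-flow curve of $\Ent$, and then appeals to a general result of Muratori--Savar\'e \cite[Theorem~3.5]{MS20} on $\mathrm{EVI}$ flows, which yields the slope contraction $\cI(\cP_t\rho)\le e^{-2Kt}\cI(\rho)$ abstractly from the metric picture. The derivation of $\MLSI(2K)$ from Fisher decay is then the same ``standard argument'' you spell out (the paper just cites \cite[Lemma~2.15]{JLL20}).

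Your Legendre-duality approach is more elementary and self-contained: it exploits the identity $\hat\rho\,\partial\log\rho=\partial\rho$ specific to the logarithmic mean and applies $\GE(K,\infty)$ directly, never passing through the Wasserstein geometry or the $\mathrm{EVI}$ framework. In particular, you do not actually use Theorem~\ref{thm:geodesic_convex} (despite mentioning it in your first paragraph), which means the regularity hypothesis on $\Lambda_{\log}$ is not genuinely needed in your argument---a small bonus. The paper's route, by contrast, is shorter on the page and conceptually cleaner once the metric-gradient-flow machinery is in place, and it illustrates that Fisher decay is a general feature of $\mathrm{EVI}_K$ flows rather than something special to this setting.
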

\begin{proof}
Let $\rho\in\cD(\cM,\tau)\cap D(\cL_2^{1/2})\cap\cM$ and $\rho_t=\cP_t\rho$. Since $(\rho_t)$ is an $\mathrm{EVI}_K$ gradient flow curve of $\Ent$ by Theorem \ref{thm:geodesic_convex} and $\frac{d}{dt}\Ent(\rho_t)=-\cI(\rho_t)$, it follows from \cite[Theorem 3.5]{MS20} that
\begin{equation*}
\cI(\cP_t \rho)\leq e^{-2Kt}\cI(\rho)
\end{equation*}
for $t\geq 0$ (using the continuity of both sides in $t$).

If $K>0$, then $\MLSI(2K)$ follows from a standard argument; see for example \cite[Lemma 2.15]{JLL20}. The implication for the complete versions is clear.
\end{proof}

\begin{remark}
The inequality $\cI(\cP_t \rho)\leq e^{-2Kt}\cI(\rho)$ is called $K$-Fisher monotonicity in \cite{BGJ20} and plays a central role there in obtaining complete logarithmic Sobolev inequalities.
\end{remark}

\section{Gradient estimates through intertwining}\label{sec:intertwining}

Following the ideas from \cite{CM17a,CM20}, we will show in this section how one can obtain gradient estimates for quantum Markov semigroups through intertwining. As examples we discuss the Ornstein--Uhlenbeck semigroup on the mixed $q$-Gaussian algebras, the heat semigroup on quantum tori, and a family of quantum Markov semigroups on discrete group von Neumann algebras and the quantum groups $O_N^+$ and $S_N^+$.

Throughout this section we assume that $\cM$ is a separable von Neumann algebra with normal faithful tracial state $\tau$ and $(\cP_t)$ is a $\Gamma$-regular QMS. We fix the corresponding first order differential calculus $(\cH, L, R, \cJ, \partial)$. We do not make any assumptions on $\Lambda$ beyond being an operator mean. In particular, all results from this section apply to the logarithmic mean -- thus yielding geodesic convexity by Theorem \ref{thm:geodesic_convex} --- as well as the right-trivial mean -- thus giving Bakry--Émery estimates.

\begin{theorem}\label{thm:intertwining}
	Let $K\in\mathbb{R}$. If there exists a family $(\vec \cP_t)$ of bounded linear operators on $\cH$ such that
	\begin{enumerate}[(i)]
		\item $\partial \cP_t =\vec \cP_t \partial$ for $t\geq 0$,
		\item $\vec \cP_t^\dagger L(\rho) \vec \cP_t\leq e^{-2Kt}L(\cP_t \rho)$ for $\rho\in\cM_+$, $t\geq 0$,
		\item $\vec \cP_t^\dagger R(\rho) \vec \cP_t\leq e^{-2Kt}R(\cP_t \rho)$ for $\rho\in\cM_+$, $t\geq 0$,
	\end{enumerate}
	then $(\cP_t)$ satisfies $\GE(K,\infty)$.
\end{theorem}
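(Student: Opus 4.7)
The plan is to reduce $\GE(K,\infty)$ to the Kubo-Ando transformer inequality. By~(i), $\partial\cP_t a=\vec\cP_t\partial a$, so
\begin{equation*}
\norm{\partial\cP_t a}_\rho^2=\ip{\vec\cP_t\partial a,\hat\rho\,\vec\cP_t\partial a}_\cH=\ip{\partial a,\vec\cP_t^\dagger\hat\rho\,\vec\cP_t\partial a}_\cH,
\end{equation*}
and the desired estimate is equivalent to the quadratic-form inequality $\vec\cP_t^\dagger\hat\rho\,\vec\cP_t\le e^{-2Kt}\widehat{\cP_t\rho}$ on $\cH$.

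To establish this I would invoke the Kubo-Ando transformer inequality $T^\ast\Lambda(A,B)T\le\Lambda(T^\ast AT,T^\ast BT)$ for an \emph{arbitrary} bounded operator $T$ --- the extension of~(b) from positive $C$ to general bounded $T$ being the standard $2\times 2$ block-matrix trick --- with $T=\vec\cP_t$, $A=L(\rho)$ and $B=R(\rho)$. Combined with the monotonicity~(a), the hypotheses~(ii)--(iii), and the positive homogeneity of $\Lambda$, this yields
\begin{equation*}
\vec\cP_t^\dagger\hat\rho\,\vec\cP_t\le\Lambda\bigl(\vec\cP_t^\dagger L(\rho)\vec\cP_t,\vec\cP_t^\dagger R(\rho)\vec\cP_t\bigr)\le e^{-2Kt}\Lambda\bigl(L(\cP_t\rho),R(\cP_t\rho)\bigr)=e^{-2Kt}\widehat{\cP_t\rho}.
\end{equation*}

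The one genuine technical point is that $\rho\in\cD(\cM,\tau)\subset L^1_+(\cM,\tau)$ need not lie in $\cM$, so $L(\rho)$, $R(\rho)$ and $\hat\rho$ are in general unbounded positive self-adjoint operators on $\cH$, and the transformer inequality --- stated in the definition only for $A,B\in B(\cH)_+$ --- must be interpreted in the form sense. My plan is to first prove the inequality for the bounded spectral cut-offs $\rho_n=\rho\,\chi_{[0,n]}(\rho)\in\cM_+$ where all objects lie in $B(\cH)_+$ and (a)--(d) apply verbatim, and then let $n\to\infty$; upward continuity of Kubo-Ando means (a consequence of the integral representation of $\Lambda$ in terms of parallel sums, dual in spirit to property~(c)) gives $\hat\rho_n\nearrow\hat\rho$ and $\widehat{\cP_t\rho_n}\nearrow\widehat{\cP_t\rho}$ as quadratic forms, so the bound persists in the limit by monotone convergence of forms. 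I expect this extension to unbounded affiliated operators to be the only real effort; the algebraic core of the proof is the three-line chain of inequalities above.
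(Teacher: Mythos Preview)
Your proposal is correct and follows essentially the same route as the paper: apply the Kubo--Ando transformer inequality (cited there as \cite[Theorem 3.5]{KA80}, which already covers arbitrary bounded $T$) to $\hat\rho=\Lambda(L(\rho),R(\rho))$, then use monotonicity and positive homogeneity of $\Lambda$ together with hypotheses (ii) and (iii). The paper's own proof simply takes $\rho\in\cM_+$ from the outset and does not discuss the passage to unbounded $\rho\in\cD(\cM,\tau)$, so your spectral cut-off argument is in fact more careful than what is written there.
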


\begin{proof}
	Let $\rho\in \cM_+$ and $a\in D(\partial)$. Since $\Lambda$ is an operator mean, we have \cite[Theorem 3.5]{KA80}
	\begin{equation*}
	\vec \cP_t^\dagger \Lambda(L(\rho),R(\rho))\vec \cP_t\leq \Lambda(\vec \cP_t^\dagger L(\rho)\vec \cP_t,\vec \cP_t^\dagger R(\rho) \vec \cP_t).
	\end{equation*}
	Thus
	\begin{equation*}
	\langle \hat \rho \partial \cP_t a,\partial \cP_t a\rangle_\cH=\langle \vec \cP_t^\dagger \hat \rho \vec \cP_t \partial a,\partial a\rangle_\cH\leq \langle \Lambda(\vec \cP_t^\dagger L(\rho)\vec \cP_t,\vec \cP_t^\dagger R(\rho)\vec \cP_t)\partial a,\partial a\rangle_\cH.
	\end{equation*}
	As $\Lambda$ is monotone in both arguments and positively homogeneous, conditions (ii) and (iii) imply
	\begin{equation*}
	\langle \Lambda(\vec \cP_t^\dagger L(\rho)\vec \cP_t,\vec \cP_t^\dagger R(\rho)\vec \cP_t)\partial a,\partial a\rangle_\cH\leq e^{-2Kt}\langle \Lambda(L(\cP_t \rho),R(\cP_t \rho))\partial a,\partial a\rangle_\cH.
	\end{equation*}
	All combined this yields 
	\begin{equation*}
	\norm{\partial \cP_t a}_\rho^2\leq e^{-2Kt}\norm{\partial a}_{\cP_t \rho}^2.\qedhere
	\end{equation*}
\end{proof}

\begin{remark}\label{rmk:diff_calc_ind}
The proof shows that assumptions (i)--(iii) still imply
\begin{equation*}
\norm{\partial \cP_t a}_{\rho}^2\leq e^{-2Kt}\norm{\partial a}_{\cP_t \rho}^2
\end{equation*}
if the differential calculus is not the one associated with $(\cP_t)$. We will use this observation in the proofs of Theorem \ref{thm:tensor_product} and Theorem \ref{thm:com_proj}.
\end{remark}

\begin{remark}
A similar technique to obtain geodesic convexity of the entropy has been employed in \cite{CM17a,CM20}. Our proof using the transformer inequality for operator means is in some sense dual to the monotonicity argument used there (see \cite{Pet96}). Apart from working in the infinite-dimensional setting, let us point out two main differences to the results from these two articles:

In contrast to \cite{CM17a}, we do not assume that $\vec \cP_t$ is a direct sum of copies of $\cP_t$ (in fact, we do not even assume that $\cH$ is a direct sum of copies of the trivial bimodule). This enhanced flexibility can lead to better bounds even for finite-dimensional examples (see Example \ref{ex:cond_exp}). In contrast to \cite{CM20}, our conditions (ii) and (iii) are more restrictive, but they are also linear in $\rho$, which makes them potentially more feasible to check in concrete examples.
\end{remark}

\begin{remark}
We do not assume that the operators $\vec \cP_t$ form a semigroup or that they are completely positive (if $\cH$ is realized as a subspace of $L^2(\cN)$ for some von Neumann algebra $\cN$). However, this is the case for most of the concrete examples where we can prove (i)--(iii).
\end{remark}

\begin{remark}
In particular, the conclusion of the previous theorem holds for all symmetric operator means, and in view of the discussions after Definition \ref{defn:CGE}, it implies that any symmetric operator mean is regular for $(P_t)$.
\end{remark}

Under a slightly stronger assumption, conditions (ii) and (iii) can be rewritten in a way that resembles the classical Bakry--Émery criterion. For that purpose define 
\begin{equation*}
\vec \Gamma\left(\sum_{k=1}^n (\partial x_k)y_k\right)=\sum_{k,l=1}^n y_k^\ast \Gamma(x_k,x_l)y_l.
\end{equation*}
In particular, $\vec \Gamma(\partial x)=\Gamma(x)$. Since $(\cP_t)$ is $\Gamma$-regular, $\vec \Gamma$ extends to a continuous quadratic map from $\cH$ to $L^1(\cM,\tau)$ that is uniquely determined by the property $\tau(x\vec \Gamma(\xi))=\langle\xi,\xi x\rangle_\cH$ for all $x\in \cM$ and $\xi\in \cH$ (see \cite[Section 2]{Wir18}).

\begin{lemma}
If $(\vec \cP_t)$ is a family of bounded linear operators on $\cH$ that commute with $\cJ$, then conditions (ii) and (iii) from Theorem \ref{thm:intertwining} are equivalent. Moreover, they are equivalent to
\begin{equation}\label{eq:vec_Bakry_Emery}
\vec \Gamma(\vec \cP_t \xi)\leq e^{-2Kt}\cP_t \vec\Gamma(\xi)
\end{equation}
for $\xi\in \cH$, $t\geq 0$.
\end{lemma}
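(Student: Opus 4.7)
My plan is to prove the two equivalences (ii) $\Leftrightarrow$ (iii) and (iii) $\Leftrightarrow$ (\ref{eq:vec_Bakry_Emery}) separately. The first uses the anti-unitary $\cJ$ to swap $L$ and $R$; the second tests the scalar inequality against positive elements and invokes the characterizing identity for $\vec\Gamma$.

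For (ii) $\Leftrightarrow$ (iii) I would first establish the swap relation $\cJ L(\rho)\cJ = R(\rho)$ for every positive $\rho \in \cM$. Applying the defining relation $\cJ(L(x)R(y)\partial z) = L(y^*)R(x^*)\partial z^*$ to vectors of the form $L(a)R(b)\partial c$ and using that $L$ and $R$ are commuting homomorphisms of $\cC$ and $\cC^\circ$ respectively gives $\cJ L(w)\cJ = R(w^*)$ for $w \in \cC$; by normality of $L$ and $R$ this promotes to arbitrary self-adjoint $\rho \in \cM$. Since $\cJ$ is an anti-unitary involution, the map $T\mapsto \cJ T\cJ$ preserves the operator order and satisfies $(\cJ T\cJ)^\dagger = \cJ T^\dagger\cJ$. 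The hypothesis $\cJ\vec\cP_t = \vec\cP_t\cJ$ then yields $\cJ\vec\cP_t^\dagger\cJ = \vec\cP_t^\dagger$, and conjugating both sides of (ii) by $\cJ$ converts $L(\rho)$ and $L(\cP_t\rho)$ into $R(\rho)$ and $R(\cP_t\rho)$, producing (iii); the reverse implication is identical.

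For (iii) $\Leftrightarrow$ (\ref{eq:vec_Bakry_Emery}) I would pair both sides of (\ref{eq:vec_Bakry_Emery}) with a test element $\rho \in \cM_+$. The identity $\tau(x\vec\Gamma(\xi)) = \langle \xi, R(x)\xi\rangle_\cH$ recalled just before the lemma gives
\[
\tau\bigl(\rho\,\vec\Gamma(\vec\cP_t\xi)\bigr) = \bigl\langle\vec\cP_t\xi, R(\rho)\vec\cP_t\xi\bigr\rangle_\cH = \bigl\langle\xi, \vec\cP_t^\dagger R(\rho)\vec\cP_t\xi\bigr\rangle_\cH,
\]
while the $\tau$-symmetry of $(\cP_t)$ together with the same identity recasts the right-hand side as $e^{-2Kt}\langle\xi, R(\cP_t\rho)\xi\rangle_\cH$. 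Letting $\rho \in \cM_+$ and $\xi \in \cH$ vary, the resulting scalar inequality is precisely the operator inequality (iii).

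The main obstacle I expect is the swap relation: the excerpt describes $\cJ$ only on explicit elementary vectors from $\cC$, so some care is needed to extend $\cJ L(\rho)\cJ = R(\rho)$ to all bounded self-adjoint $\rho$ and to handle the interaction of the Hilbert-space adjoint with the anti-linear $\cJ$ cleanly. Once these points are settled, the remainder is a formal manipulation of operator inequalities together with the defining property of $\vec\Gamma$.
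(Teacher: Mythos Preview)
Your proposal is correct and follows essentially the same route as the paper: the paper also reduces (ii) $\Leftrightarrow$ (iii) to the swap relation $\cJ L(\rho)\cJ=R(\rho)$ together with the bijectivity of $\cJ$, and reduces (iii) $\Leftrightarrow$ (\ref{eq:vec_Bakry_Emery}) to the identities $\langle \vec\cP_t\xi,R(\rho)\vec\cP_t\xi\rangle_\cH=\tau(\rho\,\vec\Gamma(\vec\cP_t\xi))$ and $\langle\xi,R(\cP_t\rho)\xi\rangle_\cH=\tau(\rho\,\cP_t\vec\Gamma(\xi))$. Your write-up is more detailed in justifying $\cJ L(\rho)\cJ=R(\rho)$ and the fact that $\cJ\vec\cP_t^\dagger\cJ=\vec\cP_t^\dagger$, but the argument is the same.
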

\begin{proof}
To see the equivalence of (ii) and (iii), it suffices to notice that $\cJ$ is a bijection and $\cJ L(\rho)\cJ=R(\rho)$ for $\rho\in \cM_+$. The equivalence of (iii) and (\ref{eq:vec_Bakry_Emery}) follows from the identities: for all $\rho\in \cM_+$:
\begin{align*}
\langle \vec \cP_t \xi,R(\rho)\vec \cP_t \xi\rangle_\cH&=\tau(\rho \vec \Gamma(\vec \cP_t \xi)),\\
\langle \xi,R(\cP_t \rho)\xi\rangle_\cH&=\tau(\cP_t \rho\vec \Gamma(\xi))=\tau(\rho \cP_t \vec \Gamma(\xi)).\qedhere
\end{align*}
\end{proof}

As indicated before, our theorem recovers the intertwining result in \cite{CM17a} (in the tracially symmetric case):
\begin{corollary}\label{cor:intertwining_dir_sum}
Assume that $\cH\cong\bigoplus_j L^2(\cM,\tau)$, $L$ and $R$ act componentwise as left and right multiplication and $\cJ$ acts componentwise as the usual involution. If $\partial_j \cP_t=e^{-Kt}\cP_t \partial_j$, then $(\cP_t)$ satisfies $\CGE(K,\infty)$.
\end{corollary}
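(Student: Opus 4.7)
The plan is to apply Theorem \ref{thm:intertwining} directly, with the intertwining family given simply by a scaled diagonal action of $\cP_t^{(2)}$. The main work is verifying conditions (ii) and (iii), for which the key input is a Kadison--Schwarz-type bound combined with $\tau$-symmetry; the complete version is then obtained by showing that the hypothesis on the differential calculus is preserved under tensoring with any finite $\cN$.

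\textbf{Step 1: Construct $\vec\cP_t$.} Under the identification $\cH\cong\bigoplus_j L^2(\cM,\tau)$, define $\vec\cP_t$ componentwise by $\vec\cP_t\left(\bigoplus_j \xi_j\right)=\bigoplus_j e^{-Kt}\cP_t^{(2)}\xi_j$. Since $\cP_t^{(2)}$ is a contraction on $L^2(\cM,\tau)$, $\vec\cP_t$ is a bounded linear operator on $\cH$. Writing $\partial=\bigoplus_j\partial_j$, the intertwining hypothesis $\partial_j\cP_t=e^{-Kt}\cP_t\partial_j$ immediately gives $\partial\cP_t a=\vec\cP_t\partial a$ for $a\in D(\cL_2^{1/2})$, verifying (i).

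\textbf{Step 2: Check (ii) and (iii).} Since $\cP_t$ is $\tau$-symmetric, $\cP_t^{(2)}$ is self-adjoint on $L^2(\cM,\tau)$, so $\vec\cP_t^\dagger$ acts componentwise as $e^{-Kt}\cP_t^{(2)}$. For $\rho\in\cM_+$, $L(\rho)$ is componentwise left multiplication by $\rho$, so on each summand one has $\vec\cP_t^\dagger L(\rho)\vec\cP_t=e^{-2Kt}\cP_t^{(2)}M_\rho\cP_t^{(2)}$. Condition (ii) then reduces to the operator inequality
\begin{equation*}
\cP_t^{(2)}M_\rho\cP_t^{(2)}\leq M_{\cP_t\rho}\quad\text{on }L^2(\cM,\tau).
\end{equation*}
Testing against $a\in\cM\subset L^2(\cM,\tau)$, using trace cyclicity and $\tau$-symmetry, this is equivalent to $\tau\bigl(\rho\,\cP_t(a)\cP_t(a)^*\bigr)\leq \tau\bigl(\rho\,\cP_t(aa^*)\bigr)$, which follows from the Kadison--Schwarz inequality $\cP_t(a)\cP_t(a)^*\leq\cP_t(aa^*)$ (valid since $\cP_t$ is unital completely positive) and the positivity of $\rho$. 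Condition (iii) follows analogously, using that $\cJ$ acts componentwise as the standard involution and hence $\cJ L(\rho)\cJ=R(\rho)$. Theorem \ref{thm:intertwining} thus yields $\GE(K,\infty)$.

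\textbf{Step 3: Pass to the complete version.} Let $\cN$ be an arbitrary finite von Neumann algebra with normal faithful tracial state. One checks that the first-order differential calculus of $\cP_t\otimes\id_\cN$ can be taken to be $\bigl(\cH\otimes L^2(\cN),\,L\otimes M_\ell,\,R\otimes M_r,\,\cJ\otimes{*},\,\partial\otimes\id\bigr)$, which under the identification $\cH\otimes L^2(\cN)\cong\bigoplus_j L^2(\cM\otimes\cN)$ still has the componentwise structure required by the hypothesis of the corollary. The intertwining property lifts componentwise: $(\partial_j\otimes\id)(\cP_t\otimes\id_\cN)=e^{-Kt}(\cP_t\otimes\id_\cN)(\partial_j\otimes\id)$. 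Invoking Remark \ref{rmk:diff_calc_ind} if needed to decouple the abstract calculus from the one canonically associated with $\cP_t\otimes\id_\cN$, Steps 1--2 apply verbatim to $\cP_t\otimes\id_\cN$, giving $\GE(K,\infty)$ for this semigroup. As $\cN$ was arbitrary, this is exactly $\CGE(K,\infty)$.

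\textbf{Main obstacle.} The only nontrivial analytic input is the operator inequality $\cP_t^{(2)}M_\rho\cP_t^{(2)}\leq M_{\cP_t\rho}$, which is a Kadison--Schwarz-plus-$\tau$-symmetry computation; everything else is diagonal bookkeeping in the direct-sum decomposition. A minor technical point is confirming that the differential calculus for $\cP_t\otimes\id_\cN$ retains the componentwise direct-sum form hypothesized in the corollary, but since $L$ and $R$ are already assumed normal (being componentwise multiplication), tensoring with the trivial calculus on $\cN$ causes no difficulty.
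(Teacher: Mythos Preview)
Your proof is correct and follows essentially the same route as the paper: both define $\vec\cP_t=e^{-Kt}\bigoplus_j\cP_t$, verify condition~(i) from the intertwining hypothesis, and reduce (ii)/(iii) to Kadison--Schwarz combined with $\tau$-symmetry, then pass to $\cP_t\otimes\id_\cN$ by noting the same structural hypotheses persist. The only cosmetic differences are that the paper checks (iii) directly as a quadratic-form inequality on $\cH$ (using $(\cP_t\xi_j)^\ast(\cP_t\xi_j)\le\cP_t(\xi_j^\ast\xi_j)$), whereas you check (ii) as an operator inequality on $L^2$ tested against $a\in\cM$ and then invoke density; both are equivalent and equally valid.
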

\begin{proof}
Let $\vec \cP_t=e^{-Kt}\bigoplus_j \cP_t$. Condition (i) from Theorem \ref{thm:intertwining} is satisfied by assumption. Since $\vec \cP_t$ commutes with $\cJ$, conditions (ii) and (iii) are equivalent. Condition (iii) follows directly from the Kadison--Schwarz inequality:
\begin{align*}
\langle \vec \cP_t\xi,R(\rho)\vec \cP_t\xi\rangle_\cH&=\sum_{j}e^{-2Kt}\tau((\cP_t \xi_j)^\ast (\cP_t \xi_j)\rho)\\
&\leq e^{-2Kt}\sum_j \tau(\xi_j^\ast \xi_j \cP_t\rho)\\
&=e^{-2Kt}\langle \xi,R(\cP_t \rho)\xi\rangle_\cH.
\end{align*}
This settles $\GE(K,\infty)$. Applying the same argument to $(\cP_t\otimes \id_\cN)$ then yields the complete gradient estimate.
\end{proof}

\begin{example}[Conditional expectations]\label{ex:cond_exp}
Let $E\colon \cM\to \cN$ be the conditional expectation onto a von Neumann subalgebra $\cN$ and let $(\cP_t)$ be the QMS with generator $\cL=I-E$, where $I=\id_{\cM}$ is the identity operator on $\cM$. Then $(\cP_t)$ satisfies $\CGE(1/2,\infty)$:

A direct computation shows that $\cP_t =e^{-t}I+(1-e^{-t})E$. Let $\vec \cP_t=e^{-t}\id_{\cH}$. Since $\cL E=0$, we have $\partial E=0$ and therefore $\partial \cP_t =e^{-t}\partial=\vec{P}_t\partial$, which settles condition (i) from Theorem \ref{thm:intertwining}. Conditions (ii) and (iii) with $K=1/2$ follow immediately from $\cP_t \rho\geq e^{-t}\rho$ for $\rho\in \cM_+$. So $(\cP_t)$ satisfies $\CGE(1/2,\infty)$. This result has been independently obtained in \cite[Theorem 4.16]{BGJ20}.

In contrast, if for example $p$ is a projection and $E(x)=pxp+(1-p)x(1-p)$, then $\cL$ has the Lindblad form $\cL x=[p,[p,x]]$. Clearly, $[p,\cdot]$ commutes with $\cL$, so that the intertwining criterion from \cite{CM17a} only implies $\CGE(0,\infty)$. In fact, in this case we may obtain a better result; see Theorem \ref{thm:com_proj}.
\end{example}

\begin{example}[Mixed $q$-Gaussian algebras]
	Let us recall the mixed $q$-Gaussian algebras. Our references are \cite{BS91,BS94,BKS97,LP99}. Let $H$ be a real Hilbert space with orthonormal basis $(e_j)_{j\in J}$.  For $k\ge 1$, denote by $S_k$ the set of permutations of $\{1,2,\dots,k\}$. For $k\ge 2$ and $1\le j\le k-1$, denote by $\sigma_{j}$ the adjacent transposition between $j$ and $j+1$. For any $\sigma\in S_k$, $I(\sigma)$ is the number of inversions of the permutations $\sigma$: 
	$$I(\sigma)=\sharp\{(i,j):1\le i<j\le k,~\sigma(i)>\sigma(j)\}.$$
	For $k\ge 1$, a \emph{$k$-atom} on $H$ is an element of the form $f_1\otimes \cdots \otimes  f_k$ with each $f_j\in H$. A \emph{$k$-basis atom} is an element of the form $e_{j_1}\otimes \cdots\otimes e_{j_k}$. Clearly all the $k$-basis atoms form a basis of $H^{\otimes k}$. For any $k$-basis atom $u=e_{j_1}\otimes \cdots\otimes e_{j_k}$, we use the notation that $\sigma(u)=e_{j_{\sigma(1)}}\otimes \cdots\otimes e_{j_{\sigma(k)}}$. 
	
	Let $Q=(q_{ij})_{i,j\in J}\in\IR^{J\times J}$ be such that  $q_{ij}=q_{ji}$ for all $i,j\in J$ and $\sup_{i,j\in J}|q_{ij}|\le1$. 
  For convenience, in the following we actually assume that $\sup_{i,j\in J}|q_{ij}|<1$. This is to simplify the definition of Fock space; our main results still apply to the general $\sup_{i,j\in J}|q_{ij}|\le1$ case.
	
	 Put $P^{(0)}=\id_{H}$. For any $k\ge 1$, denote by $P^{(k)}$ the linear operator on $H^{\otimes k}$ such that 
	\begin{equation*}
	P^{(k)}(u)=\sum_{\sigma\in S_k}a(Q,\sigma,u)\sigma^{-1}(u),
	\end{equation*}
	where $u=e_{j_1}\otimes \cdots \otimes e_{j_k}$ is any $k$-basis atom and
	\begin{equation*}
	a(Q,\sigma,u)
	=\begin{cases}
	1&\text{if }\sigma=\id,\\
	q_{j_{m_l}j_{m_l+1}}\prod_{i=0}^{l-1}q_{j_{\varphi_{i}(m_{l-i})}j_{\varphi_{i}(m_{l-i}+1)}}&\text{if }\sigma=\sigma_{m_1}\cdots\sigma_{m_l},
	\end{cases}
	\end{equation*}
	with $\varphi_i=\sigma_{m_{l-i+1}}\cdots\sigma_{m_l}$. Notice that if $\sigma=\sigma_{m_1}\cdots\sigma_{m_l}$, the coefficient $a(Q,\sigma,u)$ is well-defined, though such representation of $\sigma$ is not unique. When all the entries of $Q$ are the same, that is, $q_{ij} \equiv q$, the operator $P^{(k)}$ reduces to 
	\begin{equation*}
	P^{(k)}(u)=\sum_{\sigma\in S_k} q^{I(\sigma)}\sigma(u).
	\end{equation*}
	Under the condition that $\sup_{i,j\in J}|q_{ij}|<1$, the operator $P^{(k)}$ is strictly positive \cite[Theorem 2.3]{BS94}.
	
	Let $\F_{Q}^{\text{finite}}$ be the subspace of finite sums of the spaces  $H^{\otimes k},k\ge 0$, where $H^{\otimes 0}=\IR\Omega$ and $\Omega$ is the vacuum vector. Then $\F_{Q}^{\text{finite}}$ is a dense subset
	of $\oplus_{k\ge 0}H^{\otimes k}$, and we define an inner product $\langle\cdot,\cdot \rangle_{Q}$ on $\F_{Q}^{\text{finite}}$ as:
	\begin{equation*}
	\langle \xi,\eta\rangle_{Q}=\delta_{kl}\langle \xi,P^{(l)}\eta\rangle_0,\text{ for }\xi\in H^{\otimes k},\eta\in H^{\otimes l},\text{ and }k,l\ge0,
	\end{equation*}
where $\langle \cdot,\cdot\rangle_0$ is the usual inner product.
	The Fock space $\F_{Q}(H)$ is the completion of  $\F_{Q}^{\text{finite}}$ with respect to the inner product $\langle\cdot,\cdot \rangle_{Q}$.  When $q_{ij}\equiv q$, the Fock space $\F_{Q}(H)$ is also denoted by $\F_{q}(H)$ for short. Notice that if we only have $\sup_{i,j\in J}|q_{ij}|\le 1$, then each $P^{(k)}$ is only positive. One should divide $\F_{Q}^{\text{finite}}$ by the kernel of $\langle\cdot,\cdot \rangle_{Q}$ before taking the completion. The definition of Fock space here is actually the same as the one in \cite{BS94} associated to the Yang--Baxter operator 
$$T:H\otimes H\to H\otimes H,~~e_i\otimes e_j\mapsto q_{ji}e_j\otimes e_{i}.$$
	See \cite[Part I]{LP99} for a detailed proof for this  when $\dim H<\infty$. 

Now we recall the mixed $q$-Gaussian algebra  $\Gamma_{Q}(H)$. For any $i\in J$, the \emph{left creation operator} $l_i$ is defined by
	\begin{equation*}
	l_i(\xi)=e_i\otimes \xi,~~\xi\in\F_{Q}(H).
	\end{equation*}
	Its adjoint with respect to $\langle \cdot,\cdot\rangle_{Q}$, the \emph{left annihilation operator} $l_i^*$, is given by
	\begin{equation*}
	l_i^*(\Omega)=0,
	\end{equation*}
	\begin{align*}
	l^*_i(e_{j_1}\otimes \cdots \otimes e_{j_k})=\sum_{m=1}^{k}&\big(\delta_{i j_m}q_{j_{m}j_{m-1}}q_{j_{m}j_{m-2}}\cdots q_{j_{m}j_{1}}\\
	&\quad e_{j_1}\otimes \cdots \otimes e_{j_{m-1}}\otimes e_{j_{m+1}}\otimes \cdots \otimes e_{j_k}\big).
	\end{align*}
	The left annihilation operators and left creation operators satisfy the deformed communication relations on $\F_{Q}(H)$:
	\begin{equation*}
	l_i^* l_j-q_{ij}l_j l_i^*=\delta_{ij}\id,~~i,j\in J.
	\end{equation*}
	The mixed $q$-Gaussian algebra $\Gamma_{Q}(H)$ is defined as the von Neumann subalgebra of $B(\F_{Q}(H))$ generated by self-adjoint operators $s_i=l_i+l_i^*,i\in J$. It is equipped with a normal faithful tracial state $\tau_Q$ given by 
	\begin{equation*}
	\tau_Q(x)=\langle x \Omega,\Omega\rangle_Q.
	\end{equation*}
	The map $\phi_{H}\colon\Gamma_{Q}(H)\to\F_{Q}(H),x\mapsto x(\Omega)$, extends to a unitary, which we still denote by $\phi_H$, from $L^2(\Gamma_{Q}(H),\tau_Q)$ to $\F_{Q}(H)$. Note that $\phi_H(s_i)=e_i$. 
	
Let $T\colon H\to H$ be a contraction. Then it induces a contraction $\F_Q(T)$ on $\F_Q(H)$ such that \cite[Lemma 1.1]{LP99} 
$$\F_Q(T)\Omega=\Omega,$$
$$\F_Q(T)(f_1\otimes \cdots \otimes f_k)=T(f_1)\otimes \cdots \otimes T(f_k),$$
for any $k$-atom $f_1\otimes \cdots \otimes f_k$ and any $k\ge 1$. Moreover, there exists a unique unital and completely positive map $\Gamma_Q(T)$ on $\Gamma_Q(H)$ such that \cite[Lemma 3.1]{LP99}
$$\Gamma_Q(T)=\phi_{H}^{-1}\F_{Q}(T)\phi_{H}.$$
Remark that $\Gamma_Q$ is a functor, that is, $\Gamma_Q(ST)=\Gamma_Q(S)\Gamma_Q(T)$ for two contractions $S,T$ on $H$. If $q_{ij}\equiv q\in[-1,1]$, then we write the functor $\Gamma_Q$ as $\Gamma_q$ for short. It interpolates between the bosonic and the fermionic functors by taking $q=+1$ and $q=-1$ respectively. When $q=0$, it becomes the free functor by Voiculescu \cite{Voi85}. For more examples, see \cite[Introduction]{LP99}.

 In particular, $T_t=T_t^Q=\F_Q(e^{-t}\id_{H})$  is a semigroup of contractions on $\F_Q(H)$.
The mixed $q$-Ornstein--Uhlenbeck semigroup is defined as $P_t=P_t^Q=\Gamma_Q(e^{-t}\id_H), t\ge 0$. It extends to a semigroup of contractions on $L^2(\Gamma_Q(H),\tau_Q)$ and is $\tau_Q$-symmetric. Note that the generator of $P_t$ is $L=\phi^{-1}_{H}N\phi_{H}$, where  $N\colon\F_{Q}^{\text{finite}}(H)\to \F_{Q}^{\text{finite}}(H)$, is the number operator defined as $k\id$ on its eigenspace $H^{\otimes k},k\ge 0$. 

Put
	\begin{equation*}
	Q'=Q\otimes \begin{pmatrix}
	1&1\\
	1&1
	\end{pmatrix},
	\end{equation*}
	and $$e=\begin{pmatrix}
	1\\
	0
	\end{pmatrix},~~
	f=\begin{pmatrix}
	0\\
	1
	\end{pmatrix}.
	$$
Then $H'\colon =H\oplus H$ can be identified with $H\otimes \mathbb{R}^2$, as a direct sum of $H\otimes \mathbb{R}e$ and $H\otimes \mathbb{R}f$. The number operator $N$ admits the following form \cite[Lemma 1.2]{LP99}: $N=\nabla^\dagger\nabla$, where $\nabla\colon\F_{Q}^{\text{finite}}(H)\to \F_{Q'}^{\text{finite}}(H')$ is the \emph{gradient operator} such that $\nabla(\Omega)=0$, and 
	\begin{equation*}
	\nabla(u)=\sum_{i=1}^{k}u\otimes v_i,
	\end{equation*}
	for $k\ge 1$, $u$ being any $k$-atom on $H$ and $v_i=e\otimes \cdots\otimes f\otimes \cdots \otimes e\in(\IR^2)^{\otimes k}$, $f$ occurring in the $i$-th factor. Remark that similar to the second quantization of any contraction $T:H\to H$, the natural embedding $\iota_H:H\to H',x\mapsto x\otimes e$ also induces a unique map $h_H\colon\Gamma_{Q}(H)\to \Gamma_{Q'}(H')$ such that \cite[Lemma 3.1]{LP99}
	\begin{equation}\label{eq:h_H}
	    h_H=\Gamma_Q(\iota_H)=\phi_{H'}^{-1}\F_Q(\iota_H)\phi_H,
	\end{equation}
	where $\F_Q(\iota_H)$ is defined as $\iota_H\otimes \cdots \otimes \iota_H$ on $H^{\otimes k}$, $k\ge 0$.
Set $\partial\colon=\phi_{H'}^{-1}\nabla \phi_{H}$. Then the generator $L$ of $P_t$ takes the form $L=\partial^\dagger\partial$ and $\partial$ is a derivation \cite[Proposition 3.2]{LP99}:
	\begin{equation*}
	\partial(xy)=\partial(x)h_{H}(y)+h_{H}(x)\partial(y),
	\end{equation*}
	for all $x,y\in \phi_H^{-1}(\F_{Q}^{\text{finite}}(H))$.
	\smallskip
	
	Now we prove that $P_t=e^{-tL}$ on $\Gamma_{Q}(H)$ satisfies $\CGE(1,\infty)$. For this let us first take a look of the semigroup $T_t=e^{-tN}$ on $\F_{Q}(H)$. By definition, it equals $e^{-kt}\id$ on its eigenspace $H^{\otimes k}$. For each $t\ge 0$, consider the map $$\vec{T}_t=e^{-t}\F_{Q'}(S_t)\colon\F_{Q'}(H')\to \F_{Q'}(H'),$$
	where $S_t$ is a contraction on $H'$ given by 
	$$S_t(x\otimes e)=e^{-t}x\otimes e,~~S_t(x\otimes f)=x\otimes f,~~x\in H.$$
	Then by definition, we have the intertwining condition
	\begin{equation}\label{eq:intertwining for Fock space}
	\nabla T_t=\vec{T}_t \nabla.
	\end{equation}
	In fact, it is obvious when acting on $\mathbb{R}\Omega$. If $u$ is a $k$-atom on $H$, $k\ge1$, then 
	\begin{equation*}
	\nabla T_t(u)=e^{-kt}\nabla (u)=e^{-kt}\sum_{i=1}^{k}u\otimes v_i,
	\end{equation*}
	and 
	\begin{equation*}
	\vec{T}_t \nabla (u)
	=\sum_{i=1}^{k}\vec{T}_t (u\otimes v_i)
	=e^{-t}\sum_{i=1}^{k}\F_{Q'}(S_t)(u\otimes v_i)
	=e^{-kt}\sum_{i=1}^{k}u\otimes v_i.
	\end{equation*}
	Remark that if one chooses $\vec{T}_t=\F_{Q'}(e^{-t}\id_{H'})$, then we can only obtain $\CGE(0,\infty)$.
	
	
	Put $\vec{P}_t=\phi_{H'}^{-1}\vec{T}_t\phi_{H'}$. Then $\vec{P}_t$ is $\tau_{Q'}$-symmetric. Note that $P_t=\phi_H^{-1}T_t\phi_H$, thus by \eqref{eq:intertwining for Fock space} we have the intertwining condition
	\begin{equation*}
	\partial P_t 
	=\phi_{H'}^{-1}\nabla T_t\phi_{H}
	=\phi_{H'}^{-1}\vec{T}_t\nabla\phi_{H}
	=\vec{P}_t \partial,~t\ge 0.
	\end{equation*}
	Note that $S_t\circ \iota_H=e^{-t}\iota_H\circ\id_H,t\ge 0$. This, together with the definitions of $h_H$ \eqref{eq:h_H} and $\vec{P}_t$, yields
	\begin{align}
	\begin{split}\label{eq:intertwining h_H P_t}
	    \vec{P}_t h_{H}
		&=e^{-t}\phi_{H'}^{-1}\F_{Q'}(S_t)\F_Q(\iota_H)\phi_H\\
		&=e^{-t}\phi_{H'}^{-1}\F_Q(\iota_H)\F_Q(e^{-t}\id_H)\phi_H\\
		&=e^{-t}h_{H} \cP_t.
		\end{split}
	\end{align}
	By Theorem \ref{thm:intertwining}, to show that $P_t$ satisfies $\GE(1,\infty)$, it remains to check  (ii) and (iii)
	with $\vec{P}_t$ as above and the left and right action of $\Gamma_Q(H)$ on $\Gamma_{Q'}(H')$ being 
	$$L(\rho)a= h_{H}(\rho)a,~~ R(\rho)a=a h_{H}(\rho).$$
	To prove (ii) we need to show that for any $\rho\in \Gamma_Q(H)_+$ and $a\in\Gamma_{Q'}(H')$:
	\begin{equation*}
	\langle\vec{P}_t (a),L(\rho)\vec{P}_t (a)\rangle_2
	\le e^{-2t} \langle a,L(\cP_t(\rho))(a)\rangle_2,
	\end{equation*}
	where the inner product is induced by $\tau_{Q'}$. To see this, note that $\vec{P}_t$ is completely positive and $\vec{P}_t(1)=e^{-t}1$ \cite[Lemma 3.1]{LP99}. By the Kadison--Schwarz inequality and \eqref{eq:intertwining h_H P_t}, we have
	\begin{align*}
	\langle\vec{P}_t (a),L(\rho)\vec{P}_t (a)\rangle_2
	&=\tau_{Q'}\left(\vec{P}_t (a)\vec{P}_t (a)^*h_{H}(\rho)\right)\\
	&\le e^{-t}\tau_{Q'}\left(\vec{P}_t (aa^*)h_{H}(\rho)\right)\\
	&=e^{-t}\tau_{Q'}\left(aa^*\vec{P}_t h_{H}(\rho)\right)\\
	&=e^{-2t}\tau_{Q'}\left(aa^* h_{H} \cP_t(\rho)\right)\\
	&=e^{-2t} \langle a,L(\cP_t(\rho))(a)\rangle_2,
	\end{align*}
	which finishes the proof of (ii). The proof of (iii) is similar. So $P_t$ satisfies $\GE(1,\infty)$.
 Applying the same argument to $P_t\otimes \id_{\cN}$, we obtain $\CGE(1,\infty)$.
\end{example}

\begin{remark}
As mentioned in \cite[Section 4.4]{JLL20}, the previous example can also be deduced from the complete gradient estimate for the classical Ornstein--Uhlenbeck semigroup using the ultraproduct methods from \cite{JZ15b}. However, in contrast to this approach we do not need to use the Ricci curvature bound for the classical Ornstein--Uhlenbeck semigroup, but get it as a special case (with minor modifications accounting for $\abs{q}=1$ in this case).
\end{remark}

\begin{example}[Quantum Tori]
For $\theta\in [0,1)$ let $A_\theta$ be the universal $C^\ast$-algebra generated by unitaries $u=u_{\theta},v=v_{\theta}$ subject to the relation $vu=e^{2\pi i\theta}uv$. Let $\tau=\tau_{\theta}$ be the unique faithful tracial state on $A_\theta$ given by $\tau(u^m v^n)=\delta_{m,0}\delta_{n,0}$. The semigroup $(\cP_t)=(\cP^\theta_t)$ given by $\cP_t(u^m v^n)=e^{-t(m^2+n^2)}u^m v^n$ extends to a $\tau$-symmetric QMS on $L^\infty(A_\theta,\tau)$, which satisfies $\CGE(0,\infty)$. Here  $L^\infty(A_\theta,\tau)$ denotes the strong closure of $A_\theta$ in the GNS representation associated with $\tau$.  In fact, according to \cite[Section 10.6]{CS03}, $\cH=L^2(A_{\theta},\tau)\oplus L^2(A_{\theta},\tau)$ and $\partial(u^m v^n)=(\partial_1(u^m v^n),\partial_2(u^m v^n))=i(mu^m v^n,n u^m v^n)$. Clearly, $\partial_j$ commutes with $\cP_t$ for $j=1,2$, so that $\CGE(0,\infty)$ follows from Corollary \ref{cor:intertwining_dir_sum}.

In the commutative case $\theta=0$, $A_{\theta}=C(\mathbb{T}^2)$ is the C*-algebra of all continuous functions on flat $2$ torus $\mathbb{T}^2$ and the semigroup $(\cP_t)$ is the heat semigroup generated by the Laplace--Beltrami operator on the flat $2$-torus, which has vanishing Ricci curvature. Thus the constant $0$ in the gradient estimate is optimal.

In fact, for any $\theta,\theta'\in [0,1)$, the semigroup $\cP_t^\theta$ on $L^\infty(A_{\theta},\tau_\theta)$ satisfies $\CGE(K,\infty)$ if and only if the semigroup $(\cP_t^{\theta'})$ on $L^\infty(A_{\theta'},\tau_{\theta'})$ satisfies $\CGE(K,\infty)$. Thus the gradient estimate $\CGE(0,\infty)$ is optimal for any $\theta\in[0,1)$. To see this, note first that by standard approximation arguments it suffices to show $\GE(K,\infty)$ for $\rho\in (A_{\theta})_+$ and $a\in D(\cL_2^{1/2})\cap A_\theta$. By universal property of $A_{\theta+\theta'}$, there exists a $^\ast$-homomorphism $\pi\colon A_{\theta+\theta'}\to A_{\theta}\otimes A_{\theta'}$ such that 
\begin{equation*}
\pi(u_{\theta+\theta'})=u_{\theta}\otimes u_{\theta'},~~\pi(v_{\theta+\theta'})=v_{\theta}\otimes v_{\theta'}.
\end{equation*}
Clearly $\pi$ is trace preserving and satisfies
\begin{equation*}
(\cP^{\theta}_t\otimes \id_{A_{\theta'}})\circ\pi=\pi\circ \cP_t^{\theta+\theta'}.
\end{equation*}
So if $\cP^{\theta}_t$ satisfies $\CGE(K,\infty)$, then so does $\cP^{\theta+\theta'}_t$. Since $\theta$ and $\theta'$ are arbitrary, we finish the proof of the assertion. This idea of transference was used in \cite{Ric16} to give a simple proof that the completely bounded Fourier multipliers on noncommutative $L_p$-spaces associated with quantum tori $A_{\theta}$ do not depend on the parameter $\theta$. The transference technique has been used in \cite{GJL18,JLL20} to study complete logarithmic Sobolev inequality.

The same conclusion goes for $d$-dimensional quantum torus $A_{\theta}$ with $\theta$ being a $d$-by-$d$ real skew-symmetric matrix. 
\end{example}

\begin{example}[Quantum groups]\label{ex:quantum groups}
A \emph{compact quantum group} is a pair $\mathbb{G}=(A,\Delta)$ consisting of a unital C*-algebra $A$ and a unital $^\ast$-homomorphism $\Delta\colon A\to A\otimes A$ such that 
\begin{enumerate}
	\item $(\Delta\otimes\id_A)\Delta=(\id_A\otimes\Delta)\Delta$;
	\item $\{\Delta(a)(1\otimes b):a,b\in A\}$ and $\{\Delta(a)(b\otimes1):a,b\in A\}$ are linearly dense in $A\otimes A$.
\end{enumerate}
Here $A\otimes A$ is the minimal C*-algebra tensor product. The homomorphism $\Delta$ is called the \emph{comultiplication} on $A$. We denote $A=C(\IG)$. Any compact quantum group $\mathbb{G}=(A,\Delta)$ admits a unique \textit{Haar state}, i.e.\ a state $h$ on $A$ such that
\begin{equation*}
(h\otimes\id_A)\Delta(a)=h(a)1=(\id_A\otimes h)\Delta(a),~~a\in A.
\end{equation*}
Consider an element $u\in A\otimes B(H)$ with $\dim H=n$. By identifying $A\otimes B(H)$ with $M_n(A)$ we can write $u=[u_{ij}]_{i,j=1}^{n}$, where $u_{ij}\in A$. The matrix $u$ is called an \textit{n-dimensional representation} of $\mathbb{G}$ if we have
\[
\Delta(u_{ij})=\sum_{k=1}^{n}u_{ik}\otimes u_{kj},~~i,j=1,\dots,n.
\]
A representation $u$ is called \textit{unitary} if $u$ is unitary as an element in $M_n(A)$, and \textit{irreducible} if the only matrices $T\in M_n(\mathbb{C})$ such that $uT=Tu$ are multiples of identity matrix. Two representations $u,v\in M_n(A)$ are said to be \textit{equivalent} if there exists an invertible matrix $T\in M_n(\mathbb{C})$ such that $Tu=vT$. Denote by $\Irr(\mathbb{G})$ the set of equivalence classes of irreducible unitary representations of $\mathbb{G}$. For each $\alpha\in\Irr(\mathbb{G})$, denote by $u^\alpha\in A\otimes B(H_\alpha)$ a representative of the class $\alpha$, where $H_\alpha$ is the finite dimensional Hilbert space on which $u^\alpha$ acts. In the sequel we write $n_\alpha=\dim H_\alpha$.

Denote $\Pol(\mathbb{G})=\text{span} \left\{u^\alpha_{ij}:1\leq i,j\leq n_\alpha,\alpha\in\Irr(\mathbb{G})\right\}$. This is a dense subalgebra of $A$. On $\Pol(\mathbb{G})$ the Haar state $h$ is faithful. It is well-known that $(\Pol(\mathbb{G}),\Delta)$ is equipped with the Hopf*-algebra structure, that is, there exist a linear antihomormophism $S$ on $\Pol(\mathbb{G})$, called the \textit{antipode}, and a unital $^\ast$-homomorphism $\epsilon\colon\Pol(\mathbb{G})\to\mathbb{C}$, called the \textit{counit}, such that
\begin{equation*}
(\epsilon\otimes\id_{\Pol(\IG)})\Delta(a)=a=(\id_{\Pol(\IG)}\otimes\epsilon)\Delta(a),~~a\in\Pol(\mathbb{G}),
\end{equation*}
and
\begin{equation*}
m(S\otimes\id_{\Pol(\IG)})\Delta(a)=\epsilon(a)1=m(\id_{\Pol(\IG)}\otimes S)\Delta(a),~~a\in\Pol(\mathbb{G}).
\end{equation*}
Here $m$ denotes the multiplication map $m\colon\Pol(\mathbb{G})\otimes_{\text{alg}}\Pol(\mathbb{G})\to\Pol(\mathbb{G}),~~a\otimes b\mapsto ab$.
Indeed, the antipode and the counit are uniquely determined by
\begin{equation*}
S(u^\alpha_{ij})=(u^{\alpha}_{ji})^*,~~1\leq i,j\leq n_\alpha,~~\alpha\in\Irr(\mathbb{G}),
\end{equation*}
\begin{equation*}
\epsilon(u^\alpha_{ij})=\delta_{ij},~~1\leq i,j\leq n_\alpha,~~\alpha\in\Irr(\mathbb{G}).
\end{equation*}

\smallskip 

Since the Haar state $h$ is faithful on $\Pol(\IG)$, one may consider the corresponding GNS construction $(\pi_h,H_h,\xi_h)$ such that $h(x)=\langle \xi_h,\pi_h(x)\xi_h \rangle_{H_h}$ for all $x\in \Pol(\IG)$. The \emph{reduced $C^\ast$-algebra} $C_{r}(\IG)$ is the norm completion of $\pi_h(\Pol(\IG))$ in $B(H_{h})$. Then the restriction of comultiplication $\Delta$ to $\Pol(\IG)$, extends to a unital $^\ast$-homomorphism on $C_r(\IG)$, which we still denote by $\Delta$. The pair $(C_r(\IG),\Delta)$ forms a compact quantum group, and in the following we always consider this reduced version (instead of the \emph{universal} one, since the Haar state $h$ is always faithful on $C_{r}(\IG)$). Denote by $L^\infty(\IG)=C_r(\IG)''$ the von Neumann subalgebra of $B(H_h)$ generated by $C_r(\IG)$, and we can define the noncommutative $L^p$-spaces associated with $(L^\infty(\IG),h)$. In particular, we identify $L^2(\IG)$ with $H_h$. We refer to \cite{MV98} and \cite{Wor98} for more details about compact quantum groups. 

\smallskip

A compact quantum group $\IG$ is of \emph{Kac type} if the Haar state is tracial. In the following $\IG$ is always a compact quantum group of Kac type, which is the case for later examples $O_N^+$ and $S_N^+$. Given a L\'evy process $(j_t)_{t\ge 0}$ \cite[Definition 2.4]{CFK14} on $\Pol(\IG)$ one can associate it to a semigroup $P_t=(\id\otimes \phi_t)\Delta$ on $C_r(\IG)$, where $\phi_t$ is the marginal distribution of $j_t$. This $(P_t)$ is a strongly continuous semigroup of unital completely positive maps on $C_r(\IG)$ that are symmetric with respect to the Haar state $h$ \cite[Theorem 3.2]{CFK14}. Then $(\cP_t)$ extends to a $h$-symmetric QMS on $L^\infty(\IG)$.

The corresponding first-order differential calculus can be described in terms of a \emph{Schürmann triple} $((H,\pi),\eta,\varphi)$ \cite[Propositions 8.1, 8.2]{CFK14}. The tangent bimodule $\cH$ is then a submodule of $L^2(\mathbb{G})\otimes H$ with the left and right action given by $L=(\lambda_L\otimes \pi)\Delta$ and $R=\lambda_R\otimes \mathrm{id}_H$, respectively. Here $\lambda_L$ and $\lambda_R$ are the left and right action of $L^\infty(\mathbb{G})$ on $L^2(\mathbb{G})$:
$$\lambda_L(a)(b\xi_h)=ab\xi_h,~~\lambda_R(a)(b\xi_h)=ba\xi_h.$$
The derivation \cite[Proposition 8.1]{CFK14} is given on $\mathrm{Pol}(\mathbb{G})$ by $\partial=(\iota_h\otimes\eta)\Delta$, where $\iota_h\colon L^\infty(\mathbb{G})\to L^2(\mathbb{G})$ is the natural embedding:
$$\iota_h(a)=a\xi_h.$$

Note that the QMS $(P_t)$ is always \emph{right translation invariant}: $(\id\otimes P_t)\Delta=\Delta \cP_t$ for all $t\ge0$. In fact, any right translation invariant QMS must arise in this way \cite[Theorem 3.4]{CFK14}. Here
we are interested in semigroups $(\cP_t)$ that are not only right translation invariant but also \emph{left translation invariant}, or \emph{translation bi-invariant}: for all $t\ge0$
\begin{equation}\label{eq:bi-invariance}
(\cP_t\otimes \id)\Delta=\Delta \cP_t=(\id\otimes \cP_t)\Delta.
\end{equation}
In this case, let $\vec \cP_t=P_t\otimes \id_H$, and we have
\begin{equation*}
\vec \cP_t \partial=(P_t\otimes \id_H)(\iota_h\otimes \eta)\Delta=(\iota_h\otimes \eta)(P_t\otimes \id_A) \Delta=(\iota_h\otimes\eta)\Delta \cP_t=\partial \cP_t.
\end{equation*}
It is not hard to check that $\vec \cP_t$ is $\cJ$-real. We will show that it also satisfies the condition (iii) from Theorem \ref{thm:intertwining} for $K=0$.

For $\xi_1,\dots,\xi_n\in H$ and $x_1,\dots,x_n\in A$ we have
\begin{align*}
&\quad\left\langle (P_t\otimes \id_H)\sum_k x_k\otimes \xi_k,R(\rho)(P_t\otimes \id)\sum_k x_k\otimes \xi_k\right\rangle\\
&=\sum_{k,l}\langle \xi_k,\xi_l\rangle h((\cP_t x_k)^\ast (\cP_t x_l)\rho),
\end{align*}
and
\begin{equation*}
\left\langle \sum_k x_k\otimes \xi_k,R(\cP_t \rho)\sum_k x_k\otimes \xi_k\right\rangle
=\sum_{k,l}\langle \xi_k,\xi_l\rangle h(x_k^\ast x_l \cP_t\rho).
\end{equation*}
Clearly, the matrix $[\langle \xi_k,\xi_l\rangle]_{k,l}$ is positive semi-definite. By Kadison--Schwarz inequality, 
\begin{equation*}
[(\cP_t x_k)^\ast (\cP_t x_l)]_{k,l}\leq [\cP_t (x_k^\ast x_l)]_{k,l}.
\end{equation*}
Thus also $[h((\cP_t x_k)^\ast (\cP_t x_l)\rho)]_{k,l}\leq [h(x_k^\ast x_l \cP_t \rho)]_{k,l}$. Since the Hadamard product of positive semi-definite matrices is positive semi-definite, it follows that 
\begin{equation*}
[\langle \xi_k,\xi_l\rangle h((\cP_t x_k)^\ast(\cP_t x_l)\rho)]_{k,l}\leq [\langle \xi_k,\xi_l\rangle h(x_k^\ast x_l \cP_t\rho)]_{k,l}.
\end{equation*}
Hence 
\begin{equation*}
\sum_{k,l}\langle \xi_k,\xi_l\rangle h((\cP_t x_k)^\ast (\cP_t x_l)\rho)
\le\sum_{k,l}\langle \xi_k,\xi_l\rangle h(x_k^\ast x_l \cP_t\rho),
\end{equation*}
and we get the desired result. Thus $(\cP_t)$ satisfies $\GE(0,\infty)$. Applying the same argument to $(P_t\otimes \id_{\cN})$, we get $\CGE(0,\infty)$.

\smallskip

If each $\phi_t$ is \emph{central}:
\begin{equation}\label{eq:central}
(\phi_t\otimes \id)\Delta=(\id\otimes \phi_t)\Delta.
\end{equation}
then the QMS $\cP_t=(\id\otimes \phi_t)\Delta$ is translation-bi-invariant. Recall that the convolution of two functionals $\psi_1,\psi_2$ on $C(\mathbb{G})$ (or $C_r(\IG)$, $\Pol(\IG)$) is defined as $\psi_1\star \psi_2=(\psi_1\otimes \psi_2)\Delta$. The \emph{convolution semigroup of states} $\phi_t=\epsilon+\sum_{n\ge 1}\frac{t^{ n}}{n!}\psi^{\star n}$ is generated by $\psi$, called the \emph{generating functional}, where $\psi$ is hermitian, conditionally positive and vanishes on $1$ (see \cite[Section 2.5]{CFK14} for details). Then once the generating functional $\psi$ is central, the QMS $\cP_t=(\id\otimes \phi_t)\Delta=e^{tT_\psi}$ is translation-bi-invariant, and thus satisfies $\CGE(0,\infty)$, where $T_{\psi}=(\id\otimes \psi)\Delta$.

For the geometric Ricci curvature condition this result was independently proven in \cite[Lemma 4.6]{BGJ20}.
\end{example}

In the next few examples we collect some specific instances of QMS on quantum groups which are translation-bi-invariant. Firstly we give some commutative examples.

\begin{example}[Compact Lie groups]
For any compact group $G$, $(C(G),\Delta)$ forms a compact quantum group, where $C(G)$ is the C*-algebra of all continuous functions on $G$ and the comultiplication $\Delta\colon C(G)\to C(G)\otimes C(G)\cong C(G\times G)$ is given by $\Delta f(s,t)=f(st)$. The Haar state $h$ is nothing but $\int \cdot\, d\mu$, with $\mu$ being the Haar (probability) measure. Consider the QMS $(P_t)$ on $C(G)$: $P_t(f)(s)=\int_{G}f(r)K_t(r,s)d\mu(r)$. Then $(P_t)$ is translation bi-invariant if and only if the kernel $K_t$ is bi-invariant under $G$: $K_t(gr,gs)=K_t(r,s)=K_t(rg,sg)$ for all $g,r,s\in G$, or equivalently, $(P_t)$ is a convolution semigroup with the kernel $\tilde{K}_t(s)=K(e,s)$ being conjugate-invariant: $\tilde{K}(s)=\tilde{K}(gsg^{-1})$ for all $g,s\in G$.

Let $G$ be a compact Lie group with a bi-invariant Riemann metric $g$. If $(\cP_t)$ is the heat semigroup generated by the Laplace--Beltrami operator, then a direct computation shows that the bi-invariance of the metric implies the translation-bi-invariance of $(\cP_t)$. Thus we recover the well-known fact from Riemannian geometry that the Ricci curvature of a compact Lie group with bi-invariant metric is always nonnegative (see e.g. \cite[Section 7]{Mil76}). 
\end{example}

Secondly, we give co-commutative examples. By saying co-commutative we mean $\Delta=\Pi\circ\Delta$, where $\Pi$ is the tensor flip, i.e., $\Pi(a\otimes b)=b\otimes a$. 
\begin{example}[Group von Neumann algebras]\label{ex:group_alg}
Let $G$ be a countable discrete group with unit $e$, $C_r^\ast(G)$ the reduced $C^\ast$-algebra generated by the left regular representation $\lambda$ of $G$ on $\ell^2(G)$ and $L(G)$ the group von Neumann algebra $L(G)=C_r^\ast(G)^{\prime\prime}\subset B(\ell^2(G))$. Then $\IG=(C_r^\ast(G),\Delta)$ is a quantum group with comultiplication given by $\Delta(\lambda_g)=\lambda_g\otimes \lambda_g$. The Haar state on $\IG$ is given by $\tau(x)=\langle x\delta_e,\delta_e\rangle$, which is tracial and faithful. Here and in what follows, $\delta_g$ always denotes the function on $G$ that takes value 1 at $g$ and vanishes elsewhere.

A function $\psi\colon G\to [0,\infty)$ is a \emph{conditionally negative definite} (cnd) length function if $\psi(e)=0$, $\psi(g^{-1})=\psi(g)$ and 
\begin{equation*}
\sum_{g,h\in G}\overline{f(g)}f(h)\psi(g^{-1}h)\leq 0
\end{equation*}
for every $f\in G\to \IC$ with finite support such that $\sum_{g\in G} f(g)=0$.

By Schoenberg's Theorem (see for example \cite[Theorem D.11]{BO08}), to every cnd function one can associate a $\tau$-symmetric QMS on $L(G)$ given by
\begin{equation*}
\cP_t \lambda_g=e^{-t\psi(g)}\lambda_g.
\end{equation*}
It is easy to check that $(\cP_t)$ satisfies the translation-bi-invariant condition (\ref{eq:bi-invariance}). Thus it satisfies $\CGE(0,\infty)$.

Now we give some genuine quantum group examples.

\begin{example}[Free orthogonal quantum group $O^+_N$ \cite{Wan95}]\label{ex:free orthogonal quantum group}
	 Let $N\ge2$. The free orthogonal quantum group $O^+_N$ consists of a pair $(C_u(O^+_N),\Delta)$, where $C_u(O^+_N)$ is the universal C*-algebra generated by $N^2$ self-adjoint operators $u_{ij},1\le i,j\le N$, such that $U=[u_{ij}]_{1\le i,j\le N}\in M_N(\mathbb{C})\otimes C_u(O^+_N)$ is unitary, that is,
		\begin{equation*}
		\sum_{k=1}^{N}u_{ik}u_{jk}=\delta_{ij}=\sum_{k=1}^{N}u_{ki}u_{kj},~~1\le i,j\le N,
		\end{equation*}
	and the comultiplication $\Delta$ is given by 
	\begin{equation*}
	\Delta(u_{ij})=\sum_{k=1}^{N}u_{ik}\otimes u_{kj},~~1\le i,j\le N.
	\end{equation*}
	The equivalent classes of irreducible unitary representations of $O_N^+$ can be indexed by $\mathbb{N}$, with $u^{(0)}=1$ the trivial representation and $u^{(1)}=U$ the fundamental representation. 
	By \cite[Corollary 10.3]{CFK14}, the central generating functionals $\psi$ on $\Pol(O^+_N)$ are given by
	\begin{equation*}
	\psi(u^{(s)}_{ij})=\frac{\delta_{ij}}{U_s(N)}\left(-bU_s'(N)+\int_{-N}^{N}\frac{U_s(x)-U_s(N)}{N-x}\nu(dx)\right),
	\end{equation*}
	for $s\in \Irr(O_N^+)=\mathbb{N},1\le i,j\le n_s$, where $U_s$ denotes the $s$-th Chebyshev polynomial of second kind, $b\ge 0$, and $\nu$ is a finite measure on $[-N,N]$ with $\nu(\{N\})=0$. Then given $(b,\nu)$, the central functional $\psi$ defined as above induces a QMS $\cP_t^\psi=e^{t T_\psi}$ satisfying \eqref{eq:bi-invariance}, where $T_\psi=(\id\otimes \psi)\Delta$. Hence it satisfies $\CGE(0,\infty$).
	
\end{example}	

\begin{example}[Free permutation quantum group $S^+_N$ \cite{Wan98}]\label{ex:free permutation quantum group}
	 Let $N\ge2$. The free permutation quantum group $S^+_N$ consists of a pair $(C_u(S^+_N),\Delta)$, where $C_u(O^+_N)$ is the universal C*-algebra generated by $N^2$ self-adjoint operators $p_{ij},1\le i,j\le N$, such that 
	\begin{equation*}
	p_{ij}^2=p_{ij}=p_{ij}^*,~~\sum_{k=1}^{N}p_{ik}=1=\sum_{k=1}^{N}p_{kj},~~1\le i,j\le N,
	\end{equation*}
	and the comultiplication $\Delta$ is given by 
	\begin{equation*}
	\Delta(p_{ij})=\sum_{k=1}^{N}p_{ik}\otimes p_{kj},~~1\le i,j\le N.
	\end{equation*}
	The equivalent classes of irreducible unitary representations of $S_N^+$ can be indexed by $\mathbb{N}$.
	By \cite[Theorem 10.10]{FKS16}, the central generating functionals $\psi$ on $\Pol(S^+_N)$ are given by
	\begin{equation*}
	\psi(u^{(s)}_{ij})=\frac{\delta_{ij}}{U_{2s}(\sqrt{N})}\left(-b\frac{U_{2s}'(\sqrt{N})}{2\sqrt{N}}+\int_{0}^{N}\frac{U_{2s}(\sqrt{x})-U_{2s}(\sqrt{N})}{N-x}\nu(dx)\right),
	\end{equation*}
	for $s\in \Irr(S_N^+)=\mathbb{N},1\le i,j\le n_s$, where $U_s$ denotes the $s$-th Chebyshev polynomial of second kind, $b> 0$, and $\nu$ is a finite measure on $[0,N]$. Similarly, given $(b,\nu)$, the central functional $\psi$ defined as above induces a QMS $\cP_t^\psi=e^{t T_\psi}$ satisfying \eqref{eq:bi-invariance}, where $T_\psi=(\id\otimes \psi)\Delta$. Hence it satisfies $\CGE(0,\infty$).
\end{example}
\end{example}

\begin{remark}
Although many interesting functional inequalities like the Poincaré and the modified logarithmic Sobolev inequality only follow directly from $\GE(K,\infty)$ for $K>0$, the gradient estimate with constant $K\leq 0$ can still be helpful in conjunction with additional assumptions to prove such functional inequalities (see \cite{DR20,BGJ20}).
\end{remark}

\section{Stability under tensor products and free products}\label{sec:stability}
In this section we prove that the complete gradient estimate $\CGE(K,\infty)$ is stable under taking tensor products and free products of quantum Markov semigroups. We refer to \cite{VDN92} and \cite{BD01} for more
information on free products of von Neumann algebras and to \cite{Boc91} for free products of completely positive maps.
\begin{theorem}\label{thm:tensor_product}
Let $(\cM_j,\tau_j)$, $j\in \{1,\dots,n\}$, be tracial von Neumann algebras and $(\cP_t^j)$ a  $\tau_j$-symmetric $\Gamma$-regular QMS on $\cM_j$. If for every $j\in\{1,\dots,n\}$ the QMS $(\cP_t^j)$ satisfies $\CGE(K,\infty)$, then $\bigotimes_j \cP_t^j$ satisfies $\CGE(K,\infty)$.
\end{theorem}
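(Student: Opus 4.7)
By induction, it suffices to treat $n=2$. Let $\cM=\cM_1\otimes\cM_2$, $\tau=\tau_1\otimes\tau_2$ and $\cP_t=\cP_t^1\otimes\cP_t^2$, and denote by $(\cH_j,L_j,R_j,\cJ_j,\partial_j)$ the first-order differential calculus of $\cP_t^j$. The first-order differential calculus of $\cP_t$ decomposes naturally as $\cH=\cH^1\oplus\cH^2$ with $\cH^1=\cH_1\otimes L^2(\cM_2,\tau_2)$, $\cH^2=L^2(\cM_1,\tau_1)\otimes\cH_2$, derivation $\partial a=(\partial^1 a,\partial^2 a)$ where $\partial^1=\partial_1\otimes\id$, $\partial^2=\id\otimes\partial_2$, and block-diagonal left/right actions (e.g. $L(x\otimes y)=L_1(x)\otimes m_y$ on $\cH^1$ with $m_y$ left multiplication on $L^2(\cM_2)$). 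Consequently $\hat\rho=\Lambda(L(\rho),R(\rho))$ is block-diagonal, the $\rho$-norm splits as a direct sum, and it suffices to establish the gradient estimate on each summand.

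For the $\cH^1$-summand, I factor $\cP_t=(\cP_t^1\otimes\id)\circ(\id\otimes\cP_t^2)$. Applying $\CGE(K,\infty)$ for $\cP_t^1$ with auxiliary algebra $\cN=\cM_2$ to $b=(\id\otimes\cP_t^2)a$ yields
\[
\|\partial^1 \cP_t a\|_\rho^2 \le e^{-2Kt}\|\partial^1 (\id\otimes\cP_t^2)a\|_{(\cP_t^1\otimes\id)\rho}^2.
\]
Since $\partial^1=\partial_1\otimes\id$ commutes with $\id\otimes\cP_t^2$, the remaining task is the ``contraction in the $\rho$-norm'' inequality
\[
\|(\id\otimes\cP_t^2)\xi\|_{(\cP_t^1\otimes\id)\rho}^2 \le \|\xi\|_{\cP_t\rho}^2, \qquad \xi\in\cH^1.
\]
Setting $\sigma=(\cP_t^1\otimes\id)\rho\ge 0$, by the transformer inequality for $\Lambda$, monotonicity of operator means, and the self-adjointness of $\id\otimes\cP_t^2$ on $\cH^1$ (which follows from $\tau_2$-symmetry), the above reduces to the two operator bounds
\[
(\id\otimes\cP_t^2)L(\sigma)(\id\otimes\cP_t^2)\le L((\id\otimes\cP_t^2)\sigma),\qquad (\id\otimes\cP_t^2)R(\sigma)(\id\otimes\cP_t^2)\le R((\id\otimes\cP_t^2)\sigma)
\]
on $\cH^1$. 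This is essentially the intertwining condition of Theorem~\ref{thm:intertwining}, now applied to a differential calculus that is not the one associated with the semigroup $\id\otimes\cP_t^2$, as licensed by Remark~\ref{rmk:diff_calc_ind}.

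The main obstacle is establishing these two operator inequalities for a general positive $\sigma\in\cM_1\otimes\cM_2$. On product elements $\sigma=a\otimes b$ with $a,b\ge 0$ they collapse to $\cP_t^2 m_b\cP_t^2\le m_{\cP_t^2 b}$ on $L^2(\cM_2)$, which follows from the Kadison--Schwarz inequality $\cP_t^2(\eta)\cP_t^2(\eta)^\ast\le \cP_t^2(\eta\eta^\ast)$ for $\eta\in\cM_2$, combined with $\tau_2$-symmetry of $\cP_t^2$ and cyclicity of $\tau_2$; the inequality extends from $\cM_2$ to $L^2(\cM_2)$ by density. For general $\sigma$ I plan to argue by joint $\sigma$-weak continuity of both sides in $\sigma$, approximating $\sigma$ via positive elements of the algebraic tensor product, or alternatively by writing $L(\sigma)=L(\sigma^{1/2})^2$ and using that $\sigma\mapsto(\id\otimes\cP_t^2)L(\sigma)(\id\otimes\cP_t^2)$ and $\sigma\mapsto L((\id\otimes\cP_t^2)\sigma)$ are both completely positive maps into $B(\cH^1)$.

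A symmetric argument on the $\cH^2$-summand using $\CGE(K,\infty)$ of $\cP_t^2$ handles the second component, and summing the two estimates gives $\GE(K,\infty)$ for $\cP_t$. To upgrade to $\CGE(K,\infty)$, the entire argument is re-run for $\cP_t^1\otimes\cP_t^2\otimes\id_\cN$: in the first half of the argument one absorbs $\cN$ into the second factor and applies $\CGE$ of $\cP_t^1$ with auxiliary algebra $\cM_2\otimes\cN$, and symmetrically in the second half. The induction on $n$ then closes the proof.
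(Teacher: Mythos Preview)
Your approach is essentially the paper's: split $\partial$ into components $\partial^j$, factor $\cP_t=(\cP_t^j\text{-part})\circ(\text{rest})$, apply $\CGE(K,\infty)$ for the $j$-th factor to extract $e^{-2Kt}$, and then control the remaining factor via the intertwining $\partial^j\circ(\text{rest})=(\text{rest on }\cH^j)\circ\partial^j$ together with a Kadison--Schwarz argument as in Remark~\ref{rmk:diff_calc_ind}. The paper does this directly for general $n$ (writing $\bar\cP_t^j$, $\tilde\cP_t^j$, $Q_t^j$), but the content is the same as your $n=2$ plus induction.

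The one soft spot is your ``main obstacle'' paragraph. Neither proposed passage from product elements $\sigma=a\otimes b$ with $a,b\ge 0$ to general $\sigma\in(\cM_1\otimes\cM_2)_+$ works as written: positive elements of $\cM_1\otimes\cM_2$ are \emph{not} positive combinations of positive elementary tensors, so approximating by such elements does not close the gap; and noting that both $\sigma\mapsto(\id\otimes\cP_t^2)L(\sigma)(\id\otimes\cP_t^2)$ and $\sigma\mapsto L((\id\otimes\cP_t^2)\sigma)$ are completely positive says nothing about the sign of their difference. What is needed is that the \emph{difference} map
\[
\Theta\colon \cM_2\to B(L^2(\cM_2)),\qquad \Theta(b)=l(\cP_t^2 b)-\cP_t^2\,l(b)\,\cP_t^2
\]
(with $l$ denoting left multiplication) is itself completely positive; this is precisely the matrix form of the Kadison--Schwarz inequality for the unital completely positive map $\cP_t^2$, combined with $\tau_2$-symmetry. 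Tensoring the completely positive $\Theta$ with the normal $\ast$-representation $L_1$ of $\cM_1$ on $\cH_1$ then gives $(L_1\otimes\Theta)(\sigma)\ge 0$ for every positive $\sigma\in\cM_1\otimes\cM_2$, which is exactly the operator inequality you want. This is what the paper means by ``conditions (ii), (iii) follow from the Kadison--Schwarz inequality (compare with Example~\ref{ex:quantum groups}).'' With this fix the argument is complete.
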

\begin{proof}
Let $\cH_j$ and $\partial_j$ denote the tangent bimodule and derivation for $(\cP_t^j)$ and let 
\begin{align*}
\bar \cH_j&=\bigotimes_{k=1}^{j-1}L^2(\cM_k,\tau_k)\otimes \cH_j \otimes \bigotimes_{k=j+1}^n L^2(\cM_k,\tau_k),\\
\bar\partial_j&=\bigotimes_{k=1}^{j-1}\mathrm{id}_{\cM_k}\otimes \partial_j \otimes \bigotimes_{k=j+1}^n \mathrm{id}_{\cM_k}.
\end{align*}
The tangent module $\cH$ for $\cP_t=\bigotimes_j \cP_t^j$ is a submodule of $\cH=\bigoplus_j \bar \cH_j$ with the natural left and right action and derivation $\partial=(\bar\partial_1,\dots,\bar\partial_n)$.

For $j\in\{1,\dots,n\}$, put 
\begin{equation*}
\tilde \cP_t^j=\bigotimes_{k=1}^{j-1}\cP_t^k\otimes \mathrm{id}_{\cM_j}\otimes\bigotimes_{k=j+1}^n \cP_t^k
\end{equation*}
 and 
\begin{equation*}
\bar \cP_t^j=\bigotimes_{k=1}^{j-1}\id_{\cM_k} \otimes \cP_t^j\otimes\bigotimes_{k=j+1}^n \id_{\cM_k}
\end{equation*}
on $\bigotimes_k \cM_k$, so that $\cP_t=\bar \cP_t^j \tilde \cP_t^j=\tilde \cP_t^j \bar \cP_t^j$. Then
\begin{align*}
\norm{\partial \cP_t a}_\rho^2&=\sum_{j=1}^n \norm{\bar \partial_j \cP_t a}_\rho^2\\
&=\sum_{j=1}^n \norm{\bar \partial_j \bar \cP_t^j \tilde \cP_t^j a}_\rho^2\\
&\leq \sum_{j=1}^n e^{-2K t}\norm{\bar \partial_j \tilde \cP_t^j a}_{\bar \cP_t^j\rho}^2
\end{align*}
by $\CGE(K,\infty)$ for  $(\cP_t^j)$.

Let
\begin{equation*}
Q_t^j=\bigotimes_{k=1}^{j-1}\cP_t^k \otimes \mathrm{id}_{\cH_j}\otimes \bigotimes_{k=j+1}^n \cP_t^k
\end{equation*}
on $\bar \cH_j$. Then $\bar \partial_j \tilde \cP_t^j =Q_t^j \bar\partial_j$, and conditions (ii), (iii) in Theorem \ref{thm:intertwining} follow from the Kadison--Schwarz inequality (compare with Example \ref{ex:quantum groups}). Taking into account Remark \ref{rmk:diff_calc_ind}, we get
\begin{align*}
\norm{\bar\partial_j \tilde \cP_t^j a}_\rho^2\leq \norm{\bar\partial_j a}_{\tilde \cP_t^j \rho}^2.
\end{align*}
Together with the previous estimate, we obtain
\begin{equation*}
\norm{\partial \cP_t a}_\rho^2
\leq \sum_{j=1}^n e^{-2K t}\norm{\bar \partial_j \tilde \cP_t^j a}_{\bar \cP_t^j\rho}^2
\leq\sum_{j=1}^n e^{-2Kt}\norm{\bar\partial_j a}_{\cP_t \rho}^2
= e^{-2Kt}\norm{\partial a}_{\cP_t \rho}^2.
\end{equation*}
So $(\cP_t)$ satisfies $\GE(K,\infty)$. The same argument can be applied to $(\cP_t\otimes \id_\cN)$, so that we obtain $\CGE(K,\infty)$.
\end{proof}

\begin{theorem}\label{thm:free_product}
For $j\in\{1,\dots,n\}$ let $(\cM_j,\tau_j)$ be a tracial von Neumann algebras and $(\cP_t^j)$ a tracially symmetric $\Gamma$-regular QMS on $\cM_j$. If for every $j\in\{1,\dots,n\}$ the QMS $(\cP_t^j)$ satisfies $\CGE(K,\infty)$, then $\ast_j \cP_t^j$ satisfies $\CGE(K,\infty)$.
\end{theorem}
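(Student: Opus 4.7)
The plan is to mimic the strategy of Theorem \ref{thm:tensor_product}, replacing tensor constructions by their free-product analogs. The main preparatory step is to describe the first-order differential calculus of $\cP_t=\ast_j \cP_t^j$ in terms of the individual calculi $(\cH_j,\partial_j)$. Writing $\cM=\ast_j \cM_j$, the natural construction (in the spirit of Voiculescu's free derivation and the Cipriani--Sauvageot framework) realizes the tangent bimodule as an orthogonal direct sum
\begin{equation*}
\cH=\bigoplus_{j=1}^n \cH^{(j)},\qquad \cH^{(j)}=L^2(\cM,\tau)\otimes_{\cM_j}\cH_j\otimes_{\cM_j}L^2(\cM,\tau),
\end{equation*}
with left/right $\cM$-action coming from the left/right factors and derivation $\partial=(\partial^{(1)},\ldots,\partial^{(n)})$, where on $\ast_j \mathrm{Pol}(\cM_j)$ the component $\partial^{(j)}$ is determined by $\partial^{(j)}(a)=1\otimes\partial_j a\otimes 1$ for $a\in \cA_\Gamma^{(j)}\subset \cM_j$ and the Leibniz rule.

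With this decomposition in hand, I mimic the splitting used for tensor products. For each $j$, set
\begin{equation*}
\tilde\cP_t^j=\Bigl(\ast_{k<j}\cP_t^k\Bigr)\ast \id_{\cM_j}\ast\Bigl(\ast_{k>j}\cP_t^k\Bigr),\qquad \bar\cP_t^j=\Bigl(\ast_{k\neq j}\id_{\cM_k}\Bigr)\ast \cP_t^j,
\end{equation*}
so that each is a normal, unital, $\tau$-symmetric, completely positive map on $\cM$ (this is Boca's free product of u.c.p.\ maps), the families commute, and $\cP_t=\bar\cP_t^j\tilde\cP_t^j=\tilde\cP_t^j\bar\cP_t^j$. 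The point is that $\bar\cP_t^j$ acts only on the $j$-th free factor, so on the summand $\cH^{(j)}$ the identity
\begin{equation*}
\partial^{(j)}\bar\cP_t^j a = (\id\otimes \partial_j\cP_t^j\otimes \id)\,\Pi_j a,
\end{equation*}
reduces the estimate for $\bar\cP_t^j$ on $\cH^{(j)}$ to the complete gradient estimate of $\cP_t^j$ itself on the amplification by a finite von Neumann algebra built from the remaining free factors; this is where $\CGE(K,\infty)$ (as opposed to plain $\GE$) is essential.

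The remaining ingredient is that $\tilde\cP_t^j$ intertwines $\partial^{(j)}$ with an operator $Q_t^j$ on $\cH^{(j)}$ satisfying the two-sided Kadison--Schwarz bound needed in conditions (ii) and (iii) of Theorem \ref{thm:intertwining} with $K=0$. Concretely, $Q_t^j$ acts as $\tilde\cP_t^j\otimes_{\cM_j}\id_{\cH_j}\otimes_{\cM_j}\tilde\cP_t^j$ on $\cH^{(j)}$, and since $\tilde\cP_t^j$ is u.c.p.\ and $\tau$-symmetric, the Kadison--Schwarz argument used in Example \ref{ex:quantum groups} and in the proof of Theorem \ref{thm:tensor_product} gives
\begin{equation*}
\norm{\partial^{(j)}\tilde \cP_t^j a}_{\rho}^2\leq \norm{\partial^{(j)} a}_{\tilde \cP_t^j\rho}^2,
\end{equation*}
invoking Remark \ref{rmk:diff_calc_ind} since the differential calculus in play is $(\cH^{(j)},\partial^{(j)})$ rather than the canonical one of $\tilde\cP_t^j$. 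Combining the two bounds exactly as in the tensor-product proof,
\begin{equation*}
\norm{\partial \cP_t a}_\rho^2
=\sum_{j}\norm{\partial^{(j)}\bar\cP_t^j\tilde\cP_t^j a}_\rho^2
\leq \sum_j e^{-2Kt}\norm{\partial^{(j)}\tilde\cP_t^j a}_{\bar\cP_t^j\rho}^2
\leq e^{-2Kt}\sum_j\norm{\partial^{(j)} a}_{\cP_t\rho}^2
=e^{-2Kt}\norm{\partial a}_{\cP_t\rho}^2,
\end{equation*}
yielding $\GE(K,\infty)$ for $\cP_t$. Running the same argument with every $\cP_t^j$ replaced by $\cP_t^j\otimes\id_\cN$ (using that the free product commutes with tensoring by a common finite von Neumann algebra) upgrades this to $\CGE(K,\infty)$.

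The step I expect to be the main obstacle is making the decomposition $\cH=\bigoplus_j \cH^{(j)}$ and the identification of the partial derivations $\partial^{(j)}$ fully rigorous in the $\Gamma$-regular infinite-dimensional setting: one has to check that the direct sum really is (unitarily equivalent to) the canonical Cipriani--Sauvageot tangent bimodule of the free product semigroup, and that the dense subalgebra $\ast_j\cA_\Gamma^{(j)}$ is a core sufficient to reduce the gradient estimate to this algebra. Once this identification is granted, the splitting $\cP_t=\bar\cP_t^j\tilde\cP_t^j$ and the two estimates above carry the proof through essentially verbatim from the tensor case.
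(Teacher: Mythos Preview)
Your proposal is correct and follows essentially the same route as the paper: identify the tangent bimodule of the free product as a direct sum of pieces indexed by the factor, then run the splitting $\cP_t=\bar\cP_t^j\tilde\cP_t^j$ exactly as in the tensor case. The paper describes the bimodule more concretely via the reduced-word decomposition of $L^2(\ast_j\cM_j)$ (as a direct sum over $n\geq 1$, $j_1\neq\cdots\neq j_n$, and the position $k$ at which $\cH_{j_k}$ is inserted), whereas you package the same object as $\bigoplus_j L^2(\cM)\otimes_{\cM_j}\cH_j\otimes_{\cM_j}L^2(\cM)$; these are just two presentations of the same bimodule, and the paper then simply says ``the rest of the proof is similar to the one of Theorem~\ref{thm:tensor_product}'' without spelling out the $\bar\cP_t^j/\tilde\cP_t^j$ argument you wrote down.
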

\begin{proof}
Let $\cM=\ast_j \cM_j$, $\tau=\ast_j \tau_j$ and $\cP_t=\ast_j \cP_t^j$. Recall that $L^2(\cM,\tau)$ is canonically identified with
\begin{align*}
\ast_j L^2(\cM_j,\tau_j)=\IC 1\oplus \bigoplus_{n\geq 1}\bigoplus_{j_1\neq\dots\neq j_n}\bigotimes_{l=1}^n L^2_0(\cM_{j_l},\tau_{j_l}),
\end{align*}
where $L^2_0$ denotes the orthogonal complement of $\IC 1$ in $L^2$.

Then $\cH$ can be identified with a submodule of
\begin{equation*}
\bigoplus_{n\geq 1}\bigoplus_{j_1\neq\dots\neq j_n}\bigoplus_{k=1}^n\left(\bigotimes_{l=1}^{k-1} L^2(\cM_{j_l},\tau_{j_l})\otimes \cH_{j_k}\otimes \bigotimes_{l=k+1}^n L^2(\cM_{j_l},\tau_{j_l})\right)
\end{equation*}
with the natural left and right action on each direct summand and $\partial$ acts as $0$ on $\IC 1$ and as 
\begin{align*}
\partial(a_1\otimes\dots\otimes a_n)=(\partial_{j_1}(a_1)\otimes a_2\dots \otimes a_n,\dots,a_1\otimes a_2\otimes \dots\otimes \partial_{j_n}(a_n))
\end{align*}
on the direct summand $\bigotimes_{j_1\neq\dots\neq j_n}L^2(\cM_{j_l},\tau_{j_l})$. Since $\partial$ and $(\cP_t)$ restrict nicely to the direct summand of $L^2(\cM,\tau)$, the rest of the proof is similar to the one of Theorem \ref{thm:tensor_product}.
\end{proof}

\begin{remark}
The same argument applies to free products with amalgamation if the common subalgebra over which one amalgates is contained in the fixed-point algebra of $(\cP_t^j)$ for all $j\in\{1,\dots,n\}$ (compare with the results from \cite[Section 6.2]{JZ15a} for the $\Gamma_2$ condition).
\end{remark}

\section{Quantum Markov semigroups generated by commuting projections}\label{sec:com_proj}
In this section we move beyond applications of the intertwining result Theorem \ref{thm:intertwining} and obtain complete gradient estimate for quantum Markov semigroups whose generators take special Lindblad forms.

\begin{theorem}\label{thm:com_proj}
Let $p_1,\dots,p_n\in \cM$ be commuting projections. The QMS $(\cP_t)$ generated by
\begin{align*}
\cL\colon \cM\to\cM,\,\cL x=\sum_{j=1}^n p_j x+x p_j-2p_j x p_j
\end{align*}
is $\Gamma$-regular and satisfies $\CGE(1,\infty)$.
\end{theorem}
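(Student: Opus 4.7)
The plan is to go beyond a direct application of Theorem~\ref{thm:intertwining}---the natural choice $\vec{\cP}_{t,j} := e^{-t}\cP_t^{(j)}$ only yields $K = 1/2$, essentially because of the Kadison--Schwarz bound as in Example~\ref{ex:cond_exp}---and instead exploit an extra sign cancellation specific to commuting projections to reach the sharp $K = 1$. First, I identify the differential calculus: since $\cL x = \sum_j [p_j,[p_j,x]]$, I take $\cH = \bigoplus_{j=1}^n L^2(\cM,\tau)$ with $\partial_j x = [p_j,x]$ and $L$, $R$ acting componentwise as left and right multiplication; normality gives $\Gamma$-regularity. Because the generator of $\cP_t \otimes \id_\cN$ has exactly the same form (with commuting projections $p_j \otimes 1$), it suffices to prove $\GE(1,\infty)$.

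The algebraic reformulation uses the self-adjoint unitaries $u_j := 2p_j - 1$. A short calculation gives $\cL_j := [p_j,[p_j,\cdot]] = \id - E_j$, where $E_j := \tfrac{1}{2}(\id + \mathrm{Ad}(u_j))$, so that $e^{-t\cL_j} = (1-\mu)\,\id + \mu\,\mathrm{Ad}(u_j)$ with $\mu := (1-e^{-t})/2$. Commutativity of the $u_j$ lets me expand
\begin{equation*}
\cP_t = \sum_{S \subseteq \{1,\ldots,n\}} c_S\,\mathrm{Ad}(u_S), \qquad c_S := (1-\mu)^{n-|S|}\mu^{|S|}, \qquad u_S := \prod_{j \in S} u_j,
\end{equation*}
and similarly $\cP_t^{(j)} := \prod_{k \neq j} e^{-t\cL_k} = \sum_{S \not\ni j} c_S^{(j)}\,\mathrm{Ad}(u_S)$ with probability weights $c_S^{(j)}$. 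The intertwining $\partial_j \cP_t = e^{-t}\cP_t^{(j)}\partial_j$ then follows: $\partial_j$ commutes with $\cP_t^{(j)}$ (each $u_k$, $k \neq j$, commutes with $p_j$), while $\partial_j E_j = 0$ forces $\partial_j e^{-t\cL_j} = e^{-t}\partial_j$.

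Next I would bound the two sides of the gradient estimate with matching combinatorial formulas. Let $V_S\xi := u_S\xi u_S$, a self-adjoint unitary on $L^2(\cM)$ for which $V_S\hat\rho V_S = \widehat{u_S\rho u_S}$ by the Kubo--Ando conjugation rule. On the left, convexity of the quadratic form $\eta \mapsto \langle \eta,\hat\rho \eta\rangle$ applied to $\cP_t^{(j)}\xi = \sum_{S \not\ni j} c_S^{(j)}\,V_S\xi$ gives
\begin{equation*}
\langle \cP_t^{(j)}\partial_j a,\hat\rho\,\cP_t^{(j)}\partial_j a\rangle \le \sum_{S \not\ni j} c_S^{(j)}\,\langle V_S\partial_j a,\hat\rho\,V_S\partial_j a\rangle.
\end{equation*}
On the right, joint concavity of the operator mean (a standard Kubo--Ando property) gives $\hat{\cP_t\rho} \ge \sum_S c_S\,V_S\hat\rho V_S$, hence $\sum_j \langle \partial_j a, \hat{\cP_t\rho}\,\partial_j a\rangle \ge \sum_j \sum_S c_S\,\langle V_S\partial_j a, \hat\rho\,V_S\partial_j a\rangle$.

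The crux---and the step I expect to be the main obstacle---is the sign cancellation that makes these two combinatorial sums equal when $\xi = \partial_j a$. A direct computation shows $u_j\xi + \xi u_j = 0$, i.e.\ $V_j\xi = -\xi$; hence for $S \ni j$ we have $V_S\xi = -V_{S \setminus \{j\}}\xi$ and the quadratic forms coincide, $\langle V_S\xi,\hat\rho V_S\xi\rangle = \langle V_{S\setminus\{j\}}\xi,\hat\rho V_{S\setminus\{j\}}\xi\rangle$. Combined with the elementary identities $c_S = (1-\mu)\,c_S^{(j)}$ for $j \notin S$ and $c_S = \mu\,c_{S\setminus\{j\}}^{(j)}$ for $j \in S$, the sum $\sum_S c_S\,\langle V_S\xi,\hat\rho V_S\xi\rangle$ collapses to exactly $\sum_{S \not\ni j} c_S^{(j)}\,\langle V_S\xi,\hat\rho V_S\xi\rangle$, which matches the upper bound from convexity. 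Chaining the inequalities then yields $\norm{\partial \cP_t a}_\rho^2 \le e^{-2t}\norm{\partial a}_{\cP_t\rho}^2$. Without exploiting $V_j\xi = -\xi$, the $S \ni j$ contributions to the lower bound for $\hat{\cP_t\rho}$ would be wasted, and one would only recover $K = 1/2$; it is precisely this discrete anticommutation that absorbs the extra factor needed for the optimal constant.
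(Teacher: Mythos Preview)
Your proof is correct, but it takes a genuinely different route from the paper's.

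The paper decomposes $\cP_t = \cP_t^j Q_t^j$ with $Q_t^j = \prod_{k\neq j}\cP_t^k$ and handles the two factors separately. For $\cP_t^j$ it writes $\cP_t^j = e^{-t}I + (1-e^{-t})\Phi_j$ with the conditional expectation $\Phi_j(x)=p_jxp_j+(1-p_j)x(1-p_j)$, and proves the crucial inequality $\langle \widehat{\Phi_j(\rho)}\,\partial_j x,\partial_j x\rangle \ge \langle \hat\rho\,\partial_j x,\partial_j x\rangle$ by showing that (i) $\Phi_j\bigl((\partial_j x)^*(\partial_j x)\bigr)=(\partial_j x)^*(\partial_j x)$ via a direct computation, and (ii) $\overline{\ran\partial_j}$ is invariant under left and right multiplication by $\Phi_j(\cM)$, so that compressing with the projection $E_j$ onto $\overline{\ran\partial_j}$ and applying the transformer inequality gives $E_j\widehat{\Phi_j(\rho)}E_j\ge E_j\hat\rho E_j$. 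The factor $Q_t^j$ is then controlled by a straightforward intertwining argument with $K=0$.

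Your argument instead expands the full semigroup as a convex combination $\cP_t=\sum_S c_S\,\mathrm{Ad}(u_S)$ over subsets, and the decisive observation is the anticommutation $u_j(\partial_j a)+(\partial_j a)u_j=0$, which forces $V_S\xi=-V_{S\setminus\{j\}}\xi$ whenever $j\in S$ and collapses the $2^n$-term sum to the $2^{n-1}$ terms matching the convexity upper bound. These two key facts are closely related---your anticommutation is equivalent to $\Phi_j(\partial_j a)=0$, while the paper's computation amounts to $\Phi_j\bigl((\partial_j a)^*(\partial_j a)\bigr)=(\partial_j a)^*(\partial_j a)$; both encode that $\partial_j a$ lies in the $-1$ eigenspace of $\mathrm{Ad}(u_j)$ and its ``square'' in the $+1$ eigenspace. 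Your packaging is more explicitly combinatorial and handles everything in a single stroke, avoiding the projection/invariance argument; the paper's version is more modular and makes it immediately transparent why the single-projection case already gives $K=1$, which is where the sharpness is really decided.
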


\begin{proof}
For $1\leq j\leq n$ consider the operator $\cL_j\colon\cM\to \cM$ defined by
\begin{equation*}
\cL_j x=p_j x+xp_j-2p_j xp_j=x-p_j xp_j-(1-p_j)x(1-p_j).
\end{equation*}
In particular, $\cL_j$ is of the form $\cL_j=I-\Phi_j$ with $I=\id_{\cM}$ and the conditional expectation $\Phi_j(x)=p_j x p_j+(1-p_j)x(1-p_j)$. Thus the QMS $(\cP_t^j)$ generated by $\cL_j$ is given by 
\begin{align*}
\cP_t^j x=x+(e^{-t}-1)\cL_j x=e^{-t}x+(1-e^{-t})\Phi_j(x).
\end{align*}
A first-order differential calculus for $(\cP_t)$ is given by $\cH=\bigoplus_{j=1}^n L^2(\cM,\tau)$ as bimodules, $L=(L_j)_j,R=(R_j)_j$ with $L_j$ and $R_j$ being the usual left and right multiplications of $\cM$ on $L^2(\cM,\tau)$ respectively, and $\partial=(\partial_j)$, where $\partial_j x=[p_j,x]$. Thus $(\cP_t)$ is $\Gamma$-regular.

Moreover, $\partial_j \cP_t^j x=e^{-t}\partial_j x$ and consequently
\begin{equation}\label{eq:partial_P_t}
\norm{\partial_j \cP_t^j x}_\rho^2=e^{-2t}\norm{\partial_j x}_\rho^2.
\end{equation}
On the other hand, by the concavity of operator means \cite[Theorem 3.5]{KA80} we have
\begin{equation}\label{eq:conc_decom_QMS}
\widehat{\cP_t^j \rho}\geq e^{-t}\hat \rho+(1-e^{-t})\widehat{ \Phi_j(\rho)}.
\end{equation}
Since
\begin{align*}
&\quad\;\cL_j ((\partial_j x)^\ast (\partial_j x))\\
&=p_j x^\ast x p_j+p_j x^\ast p_j x-p_j x^\ast p_j x-p_j x^\ast p_j x p_j\\
&\quad+p_j x^\ast x p_j +x^\ast p_j x p_j-p_j x^\ast p_j x p_j-x^\ast p_j x p_j\\
&\quad -2 p_j x^\ast x p_j-2 p_j x^\ast p_j x p_j+2p_j x^\ast p_j v p_j+2 p_j x^\ast p_j x p_j\\
&=0,
\end{align*}
we have 
$$\Phi_j((\partial_j x)^\ast (\partial_j x))
=(I-\cL_j)\left((\partial_j x)^\ast (\partial_j x)\right)
=(\partial_j x)^\ast (\partial_j x).$$
Recall that $L_j$ and $R_j$ are respectively the usual left and right multiplications of $\cM$ on $L^2(\cM,\tau)$ and denote by $E_j$ the projection onto $\overline{\ran \partial_j}$ in $L^2(\cM,\tau)$. It follows that
\begin{align*}
\langle R_j(\Phi_j(\rho))(\partial_j x),\partial_j x\rangle_2
&=\tau(\Phi_j(\rho)(\partial_j x)^\ast (\partial_j x))\\
&=\tau(\rho \Phi_j((\partial_j x)^\ast (\partial_j x)))\\
&=\tau(\rho (\partial_j x)^\ast (\partial_j x))\\
&=\langle R_j(\rho)\partial_j x,\partial_j x\rangle_2.
\end{align*}
Hence $E_j R_j(\Phi_j(\rho)) E_j=E_j R_j(\rho)E_j$. The analogous identity for the left multiplication follows similarly.

Note that both the left and right multiplication by $\Phi_j(x)=p_j x p_j+(1-p_j)x(1-p_j)$ leave $\overline{\ran \partial_j}$ invariant. In fact, for any $x,y\in \cM$ one has 
\begin{align*}
\Phi_j(x)\partial_j(y)
&=p_j(p_jx p_j y)- (p_jx p_j y )p_j\\
&\quad\;+p_j((1-p_j)x(1-p_j)y)-((1-p_j)x(1-p_j)y)p_j\\
&=\partial_j(p_j x p_j y)+\partial_j ((1-p_j)x(1-p_j)y),
\end{align*}
and a similar equation holds for the right multiplication.

Therefore we have
\begin{align*}
E_j L_j(\Phi_j(\rho))E_j&\le L_j(\Phi_j(\rho)),\\
E_j R_j(\Phi_j(\rho))E_j&\le R_j(\Phi_j(\rho)).
\end{align*}
This, together with the conditions (a) and (b) in the definition of operator means, implies
\begin{align*}
E_j\widehat{\Phi_j(\rho)} E_j
&\ge E_j\Lambda(E_j L_j(\Phi_j(\rho))E_j,E_j R_j(\Phi_j(\rho)) E_j)E_j\\
&=E_j\Lambda(E_j L_j(\rho)E_j,E_j R_j(\rho)E_j)E_j\\
&\geq E_j \hat \rho E_j.
\end{align*}
In other words,
\begin{equation*}
\langle \widehat{\Phi_j(\rho)} \partial_j x,\partial_j x\rangle_2\geq \langle \hat \rho \partial_j x,\partial_j x\rangle_2.
\end{equation*}
Together with (\ref{eq:conc_decom_QMS}) we conclude
\begin{equation*}
\norm{\partial_j x}_{\cP_t^j \rho}^2\geq e^{-t}\norm{\partial_j x}_\rho^2+(1-e^{-t})\norm{\partial_j x}_\rho^2=\norm{\partial_j x}_\rho^2.
\end{equation*}
In view of \eqref{eq:partial_P_t}, we have proved 
\begin{equation}\label{eq:estimate_P_t^j}
    \norm{\partial_j \cP_t^j x}_\rho^2\le e^{-2t}\norm{\partial_j x}_{\cP_t^j \rho}^2.
\end{equation}
Now let us come back to our original semigroup $(\cP_t)$. Let 
\begin{equation*}
Q_t^j=\prod_{k\neq j} \cP_t^k.
\end{equation*}
Since the $p_j$'s commute, so do the generators $\cL_j$'s and the semigroups $\cP_t^j$'s. This means that the order in the definition of $Q_t^j$ does not matter and $\cP_t =\cP_t^j Q_t^j$ for all $j\in\{1,\dots,n\}$. From the intertwining technique and Remark \ref{rmk:diff_calc_ind} we deduce
\begin{equation*}
\norm{\partial_j Q_t^j x}_\rho^2\leq \norm{\partial_j x}_{Q_t^j \rho}^2.
\end{equation*}
Combined with the estimate \eqref{eq:estimate_P_t^j} for $(\cP_t^j)$, we obtain
\begin{equation*}
\norm{\partial \cP_t x}_\rho^2=\sum_{j=1}^n \norm{\partial_j \cP_t^j Q_t^j x}_\rho^2\leq e^{-2t}\sum_{j=1}^n \norm{\partial_j Q_t^j x}_{\cP_t ^j \rho}^2\leq e^{-2t} \norm{\partial x}_{\cP_t \rho}^2.
\end{equation*}
So $(P_t)$ satisfies $\GE(1,\infty)$. To prove $\CGE(1,\infty)$, it suffices to note that the generator of $(\cP_t\otimes \id_\cN)$ is given by
\begin{equation*}
(\cL\otimes \id_\cN)x=\sum_{j=1}^n (p_j\otimes 1) x+x(p_j\otimes 1)-2(p_j\otimes 1) x(p_j\otimes 1)
\end{equation*}
and the elements $(p_j\otimes 1)$ are again commuting projections.
\end{proof}

\begin{remark}
Since $\cL_j^2=\cL_j$, the spectrum of $\cL_j$ is contained in $\{0,1\}$ with equality unless $\cL_j=0$. Thus the gradient estimate for the individual semigroups $(\cP_t^j)$ is optimal (unless $v_j=0$). It should also be noted that it is better than the gradient estimate one would get from Example \ref{ex:cond_exp}.
\end{remark}

\begin{remark}
Inspection of the proof shows that the same result holds if the generator of $(\cP_t)$ is of the form $\cL=\frac 1 2\sum_{j=1}^n (x-u_j xu_j)$ with commuting self-adjoint unitaries $u_j$.
\end{remark}

\begin{example}
Let $X=\{0,1\}^n$ and $\epsilon_j\colon X\to X$ the map that swaps the $j$-th coordinate and leaves the other coordinates fixed. Let $v_j=\sum_x \ket{\epsilon_j(x)}\bra{x}\in B(\ell^2(X))$. By the previous remark, the QMS on $B(\ell^2(X))$ with generator 
\begin{align*}
\cL \colon B(\ell^2(X))\to B(\ell^2(X)),\,\cL A=\frac 1 2\sum_{j=1}^n (A-v_j A v_j)
\end{align*}
satisfies $\CGE(1,\infty)$. The restriction of this semigroup to the diagonal algebra is (up to rescaling of the time parameter, depending on the normalization) the Markov semigroup associated with the simple random walk on the discrete hypercube (see \cite[Example 5.7]{EM12}).
\end{example}

To apply the theorem above to group von Neumann algebras, we will use the following Lindblad form for QMS generated by cnd length functions. Recall that for a countable discrete group $G$, a $1$-cocycle is a triple $(H,\pi,b)$, where $H$ is a real Hilbert space, $\pi\colon G\to O(H)$ is an orthogonal representation, and $b\colon G\to H$ satisfies the cocycle law: $b(gh)=b(g)+\pi(g)b(h),g,h\in G.$ To any cnd function $\psi$ on a countable discrete group $G$, one can associate with a $1$-cocycle  $(H,\pi,b)$ such that $\psi(gh^{-1})=\|b(g)-b(h)\|^2,g,h\in G$. See \cite[Appendix D]{BO08} for more information.

\begin{lemma}
Let $G$ be a countable discrete group and $\psi\colon G\to [0,\infty)$ a cnd length function. Then $\cL\colon\lambda_g\mapsto \psi(g)\lambda_g$ generates a QMS on the group von Neumann algebra of $G$. Assume that the associated $1$-cocycle $b\colon G\to H$ takes values in a finite-dimensional real Hilbert space $H$ with an orthonormal basis $(e_1,\dots,e_n)$. Then the generator $\cL$ is of the form
\begin{align*}
\cL x=\sum_{j=1}^n v_j^2 x+x v_j^2 -2v_j x v_j,
\end{align*}
where $v_j$ is a linear operator on $\ell^2(G)$ given by $v_j \delta_g=\langle b(g),e_j\rangle \delta_g$.
\end{lemma}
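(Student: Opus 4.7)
The plan is to verify the claimed Lindblad form by a direct computation on the canonical orthonormal basis $\{\delta_h\}_{h\in G}$ of $\ell^2(G)$. The operators $v_j$ are diagonal multiplication operators, and $\lambda_g$ is the permutation $\delta_h\mapsto \delta_{gh}$, so every term can be evaluated explicitly. The fact that $\cL(\lambda_g)=\psi(g)\lambda_g$ generates a QMS on $L(G)$ is Schoenberg's theorem as already invoked in Example~\ref{ex:group_alg}; the content of the lemma is just the algebraic identity.

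First I would record the basic facts. Applying the cocycle law to $g=h=e$ gives $b(e)=0$, hence the relation $\psi(g_1 g_2^{-1})=\|b(g_1)-b(g_2)\|^2$ attached to the cocycle specializes at $g_2=e$ to $\psi(g)=\|b(g)\|^2$. In particular, since $(e_j)$ is orthonormal,
\begin{equation*}
\sum_{j=1}^n \langle b(g),e_j\rangle^2 = \|b(g)\|^2 = \psi(g).
\end{equation*}
Each $v_j$ is self-adjoint and diagonal with $v_j^2\delta_g=\langle b(g),e_j\rangle^2\delta_g$, so $\sum_j v_j^2\delta_g=\psi(g)\delta_g$.

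Next I would compute, for fixed $g,h\in G$ and each $j$, the three terms
\begin{align*}
v_j^2\lambda_g\delta_h &= \langle b(gh),e_j\rangle^2\,\delta_{gh},\\
\lambda_g v_j^2 \delta_h &= \langle b(h),e_j\rangle^2\,\delta_{gh},\\
v_j\lambda_g v_j \delta_h &= \langle b(h),e_j\rangle\,\langle b(gh),e_j\rangle\,\delta_{gh}.
\end{align*}
Subtracting twice the third from the sum of the first two and summing over $j$ completes the square:
\begin{equation*}
\sum_{j=1}^n\bigl(v_j^2\lambda_g+\lambda_g v_j^2-2v_j\lambda_g v_j\bigr)\delta_h
= \sum_{j=1}^n\bigl(\langle b(gh),e_j\rangle-\langle b(h),e_j\rangle\bigr)^2\,\delta_{gh}
= \|b(gh)-b(h)\|^2\,\delta_{gh}.
\end{equation*}
Applying the cocycle identity with $g_1=gh$ and $g_2=h$ gives $\|b(gh)-b(h)\|^2=\psi((gh)h^{-1})=\psi(g)$, so the right-hand side equals $\psi(g)\delta_{gh}=\psi(g)\lambda_g\delta_h$. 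This agrees with $\cL(\lambda_g)\delta_h$, which identifies the two operators on the dense subspace $\mathrm{span}\{\delta_h:h\in G\}$, and hence as generators.

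There is no real obstacle here; the only point requiring a word of care is that the $v_j$ need not be bounded on $\ell^2(G)$, since $\langle b(\cdot),e_j\rangle$ can be unbounded on $G$ even when $\dim H<\infty$. This causes no problem because all the manipulations take place on the algebraic span of the $\delta_h$, which is a common core for the operators involved and for $\cL$ restricted to $\mathrm{span}\{\lambda_g\}$, and it is on this core that the two expressions are compared.
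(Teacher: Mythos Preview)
Your proof is correct and follows essentially the same approach as the paper: both compute $v_j^2\lambda_g\delta_h$, $\lambda_g v_j^2\delta_h$, and $v_j\lambda_g v_j\delta_h$ explicitly, sum over $j$ to complete the square, and identify the result as $\psi(g)\delta_{gh}$ via the cocycle relation. Your additional remarks on Schoenberg's theorem and the possible unboundedness of the $v_j$ are welcome clarifications not spelled out in the paper's version.
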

\begin{proof}
By definition we have
\begin{align*}
v_j^2 \lambda_g(\delta_h)&=v_j^2 (\delta_{gh})=\langle b(gh),e_j\rangle v_j(\delta_{gh})=\langle b(gh),e_j\rangle^2\delta_{gh},\\
\lambda_g v_j^2(\delta_h)&=\langle b(h),e_j\rangle\lambda_g v_j(\delta_h)=\langle b(h),e_j\rangle^2\lambda_g (\delta_h)=\langle b(h),e_j\rangle^2\delta_{gh},\\
v_j\lambda_g v_j(\delta_h)&=\langle b(h),e_j\rangle v_j\lambda_g(\delta_h)=\langle b(h),e_j\rangle v_j(\delta_{gh})=\langle b(h),e_j\rangle \langle b(gh),e_j\rangle\delta_{gh}.
\end{align*}
Thus 
\begin{equation*}
\begin{split}
&\sum_{j}\left(v_j^2\lambda_g+\lambda_g v_j^2-2v_j\lambda_g v_j\right)(\delta_h)\\
=&\sum_{j}\left(\langle b(gh),e_j\rangle^2+\langle b(h),e_j\rangle^2-2\langle b(h),e_j\rangle \langle b(gh),e_j\rangle\right)\delta_{gh}\\
=&\sum_{j}\langle b(gh)-b(h),e_j\rangle^2 \delta_{gh}\\
=&\|b(gh)-b(h)\|^2\delta_{gh}.
\end{split}
\end{equation*}
This is nothing but $\cL(\lambda_g)(\delta_h)=\psi(g)\lambda_g(\delta_h)=\psi(g)\delta_{gh}$.
\end{proof}

\begin{remark}\label{rmk:extension_group_vna}
The elements $v_j$ are not contained in the group von Neumann algebra $L(G)$ so that Theorem \ref{thm:com_proj} is not directly applicable (even if the $v_j$ are projections). However, if $G$ is finite, then the operator
\begin{equation*}
\cL\colon B(\ell^2(G))\to B(\ell^2(G)),\,\cL x=\sum_{j=1}^n v_j^2 x+x v_j^2 -2v_j x v_j,
\end{equation*}
generates a tracially symmetric QMS on $B(\ell^2(G))$ and we can apply Theorem \ref{thm:com_proj} to that semigroup instead. It is an interesting open question how to treat infinite groups for which the generator has such a Lindblad form.
\end{remark}

\begin{example}
The cyclic group $\mathbb{Z}_n=\{0,1,\dots,n-1\}$; see \cite[Example 5.9]{JZ15a}: Its group (von Neumann) algebra is spanned by $\lambda_k,0\le k\le n-1$. One can embed $\mathbb{Z}_n$ to $\mathbb{Z}_{2n}$, so let us assume that $n$ is even. The word length of $k\in\mathbb{Z}_n$ is given by $\psi(k)=\min\{k,n-k\}$. Define $b\colon\mathbb{Z}_n\to \mathbb{R}^{\frac{n}{2}}$ via\begin{equation*}
b(k)=\begin{cases}
0,&k=0,\\
\sum_{j=1}^{k}e_j,&1\le k\le \frac{n}{2},\\
\sum_{j=k-\frac{n}{2}+1}^{\frac{n}{2}}e_j,&\frac{n}{2}+1\le k\le n-1,
\end{cases}
\end{equation*}
where $(e_j)_{1\le j\le \frac{n}{2}}$ is an orthonormal basis of $\mathbb{R}^{\frac{n}{2}}$.

Then the linear operators $v_j\colon\ell^2(\mathbb{Z}_n)\to \ell^2(\mathbb{Z}_n)$ given by
\begin{equation*}
v_j(\delta_k)=\langle b(k),e_j\rangle \delta_k,~~1\le j\le \frac{n}{2}
\end{equation*}
are commuting projections. Thus the QMS associated with $\psi(g)=\norm{b(g)}^2$ satisfies $\CGE(1,\infty)$.
\end{example}

\begin{example}\label{ex:symmetric_group}
The symmetric group $S_n$: Let $\psi$ be the length function induced by the (non-normalized) Hamming metric, that is, $\psi(\sigma)=\#\{j : \sigma(j)\neq j\}$. Let $A_\sigma\in M_n(\IR)$ be the permutation matrix associated with $\sigma$, i.e., $A_\sigma \delta_j =\delta_{\sigma(j)}$. Then the associated $1$-cocycle is given by $H=L^2(M_n(\IR),\frac 1 2 \mathrm{tr})$, $b(\sigma)=A_\sigma-1$, $\pi(\sigma)=A_\sigma$.

The matrices $E_{jk}=\sqrt{2}\ket{j}\bra{k}$ for $j\neq k$ and $E_{jj}=-\sqrt{2}\ket{j}\bra{j}$ form an orthonormal basis of $H$. Define $v_{jk}\in B(\ell^2(S_n))$ by $v_{jk}\delta_\sigma=\sqrt{2}\langle b(\sigma),E_{jk}\rangle \delta_{\sigma}$. Then $v_{jk}$ is a projection. Moreover,
\begin{align*}
\cL x=\frac 1 2\sum_{j,k}v_{jk}^2x+x v_{jk}^2-2v_{jk}x v_{jk}.
\end{align*}
Thus the associated QMS satisfies $\CGE(1/2,\infty)$.
\end{example}

To extend the last example to the infinite symmetric group $S_\infty$, we need the following approximation result.

\begin{lemma}
Let $(\cM_n)$ be an ascending sequence of von Neumann subalgebras such that $\bigcup_n \cM_n$ is $\sigma$-weakly dense in $\cM$. Further let $(\cP_t)$ be a $\Gamma$-regular QMS on $\cM$ and assume that $\cP_t(\cM_n)\subset \cM_n$. Let $(\cP_t^n)$ denote the restriction of $(\cP_t)$ to $\cM_n$. If $(\cP_t^n)$ satisfies $\mathrm{GE}(K,\infty)$ for all $n\in\mathbb{N}$, then $(\cP_t)$ also satisfies $\mathrm{GE}(K,\infty)$. The same is true for $\CGE$.
\end{lemma}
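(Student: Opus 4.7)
The plan is to prove the lemma via a conditional-expectation approximation. Let $E_n\colon\cM\to\cM_n$ denote the trace-preserving (normal) conditional expectation. First I would observe that the hypothesis $\cP_t(\cM_n)\subset\cM_n$ together with $\tau$-symmetry of $\cP_t$ forces $E_n\cP_t=\cP_t E_n$ for all $t\ge0$: for $x\in\cM$ and $y\in\cM_n$,
\[
\tau(E_n(\cP_t x)y)=\tau(\cP_t x\cdot y)=\tau(x\cP_t y)=\tau(E_n x\cdot \cP_t y)=\tau(\cP_t E_n x\cdot y).
\]
Consequently $E_n$ commutes with $\cL_2$ and $\cL_2^{1/2}$, so $\cL_2^n$ is just the restriction of $\cL_2$ to $L^2(\cM_n)$. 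By the essential uniqueness of the first-order differential calculus \cite[Section 8]{CS03}, there is a canonical isometric $\cM_n$-bimodule embedding $\cH_n\hookrightarrow\cH$ intertwining derivations: $\partial_n b=\partial b$ for $b\in\cC_n:=D((\cL_2^n)^{1/2})\cap\cM_n$, and the left/right actions of $\cM_n$ on $\cH_n$ coincide with the restrictions of those on $\cH$.

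Next, I would approximate. Fix $a\in D(\cL_2^{1/2})\cap\cM$ and (after a standard reduction) $\rho\in\cD(\cM,\tau)\cap\cM_+$, and set $a_n:=E_n a\in\cC_n$ and $\rho_n:=E_n\rho\in\cD(\cM_n,\tau_n)\cap(\cM_n)_+$. Since $\bigcup_n\cM_n$ is $\sigma$-weakly dense, $E_n\xi\to\xi$ in $L^2$ for every $\xi\in L^2(\cM,\tau)$. Combined with $[E_n,\cL_2^{1/2}]=0$, this yields $\cL_2^{1/2}a_n\to\cL_2^{1/2}a$ in $L^2$, hence $\partial a_n\to\partial a$ strongly in $\cH$, and likewise $\partial\cP_t a_n\to\partial\cP_t a$ strongly. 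The sequence $(\rho_n)$ is uniformly bounded by $\|\rho\|_\cM$ and converges $\sigma$-strongly to $\rho$; by normality of $L$ and $R$ we obtain $L(\rho_n)\to L(\rho)$, $R(\rho_n)\to R(\rho)$ strongly, and the same for $\cP_t\rho_n\to\cP_t\rho$.

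Now apply the assumed gradient estimate on $\cM_n$. Using $\cP_t a_n=\cP_t^n a_n$, $\cP_t\rho_n=\cP_t^n\rho_n$, and the identification from the first paragraph,
\[
\|\partial\cP_t a_n\|_{\rho_n}^2\le e^{-2Kt}\|\partial a_n\|_{\cP_t\rho_n}^2.
\]
It remains to pass to the limit. Strong convergence of $L(\rho_n)$ and $R(\rho_n)$ (on a uniformly bounded sequence) together with the Löwner integral representation of $\Lambda$ yields $\hat\rho_n\to\hat\rho$ and $\widehat{\cP_t\rho_n}\to\widehat{\cP_t\rho}$ in the strong operator topology, so for any strongly convergent sequence $\xi_n\to\xi$ in $\cH$ one has $\|\xi_n\|_{\rho_n}^2\to\|\xi\|_\rho^2$ (and analogously with $\cP_t\rho_n$). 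Letting $n\to\infty$ in the displayed inequality gives $\mathrm{GE}(K,\infty)$ for $(\cP_t)$. The complete gradient estimate $\CGE(K,\infty)$ follows by running the same argument with $\cM_n\otimes\cN\subset\cM\otimes\cN$ and conditional expectation $E_n\otimes\id_\cN$.

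The main obstacle is the joint passage to the limit in the quadratic form $\|\cdot\|_\sigma^2$, since the operator mean $\hat\sigma=\Lambda(L(\sigma),R(\sigma))$ is unbounded and $\sigma$ is varying; in particular one needs to be sure that both sides of the $\mathrm{GE}$ inequality converge (not merely that the LHS is lower semi-continuous), which is what forces the use of strong operator convergence of the $L$'s and $R$'s on a bounded sequence together with the integral representation of $\Lambda$.
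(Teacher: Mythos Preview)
Your approach is correct, but it differs from the paper's in the approximation scheme. The paper does not project via conditional expectations; instead it argues that $V=D(\cL_2^{1/2})\cap\bigl(\bigcup_n\cM_n\bigr)$ is dense in $L^2$ and $(\cP_t)$-invariant, hence a form core for $\cL_2$, so it suffices to verify $\mathrm{GE}$ for $a\in V$; and by Kaplansky's density theorem together with strong continuity of the functional calculus it suffices to take $\rho\in\bigl(\bigcup_n\cM_n\bigr)_+$. For such $a$ and $\rho$ (chosen in some common $\cM_n$) the inequality \emph{is} the gradient estimate for $(\cP_t^n)$, and no limiting procedure inside the inequality is needed. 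Your route via $a_n=E_n a$, $\rho_n=E_n\rho$ and $[E_n,\cL_2^{1/2}]=0$ is more constructive and yields the same conclusion, but it requires you to pass to the limit simultaneously in both variables of $\norm{\cdot}_\sigma^2$, which is precisely the point you flag at the end. Both arguments implicitly rely on the identification of $(\cH_n,\partial_n,L_n,R_n)$ with the $\cM_n$-sub-bimodule of $\cH$ generated by $\partial(\cC_n)$ (the paper leaves this unspoken), and both ultimately need the same strong continuity of $\rho\mapsto\hat\rho=\Lambda(L(\rho),R(\rho))$ on norm-bounded sets; the paper simply packages this as ``strong continuity of functional calculus'' rather than invoking the integral representation of $\Lambda$. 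The paper's reduction is a bit shorter because once $a$ and $\rho$ lie in a fixed $\cM_n$, no joint limit is required; your approach, on the other hand, makes the approximation mechanism completely explicit and avoids the form-core/Kaplansky citations.
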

\begin{proof}
It is not hard to see that $\bigcup_n \cM_n$ is dense in $L^2(\cM,\tau)$. Since $\cP_t (\cM_n)\subset \cM_n$ and $\cP_t$ maps into the domain of its $L^2$ generator $\cL_2$, the space $V=D(\cL_2^{1/2})\cap \left(\bigcup_n \cM_n\right)$ is also dense in $L^2(\cM,\tau)$ and invariant under $(\cP_t)$. Using a standard result in semigroup theory, this implies that $V$ is a form core for $\cL$. Thus it suffices to prove
\begin{equation*}
\norm{\partial \cP_t a}_\rho^2\leq e^{-2Kt}\norm{\partial a}_{\cP_t \rho}^2
\end{equation*}
for $a\in V$ and $\rho\in \cM_+$. Moreover, by Kaplansky's density theorem and the strong continuity of functional calculus, checking it for $\rho \in (\bigcup_n \cM_n)_+$ is enough. But for $a\in D(\cL^{1/2}_2)\cap \cM_n$ and $\rho\in(\cM_n)_+$, this is simply the gradient estimate for $(\cP_t^n)$, which holds by assumption.

The argument for $\CGE$ is similar.
\end{proof}

\begin{corollary}
If $G$ is the ascending union of subgroups $G_n$ and $\psi$ is a cnd length function on $G$ such that for every $n$ the QMS associated with $\psi|_{G_n}$ satisfies $\GE(K,\infty)$, then the QMS associated with $\psi$ satisfies $\GE(K,\infty)$. The same is true for $\CGE$.
\end{corollary}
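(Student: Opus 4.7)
The plan is to reduce the corollary to the preceding lemma by taking $\cM = L(G)$ and $\cM_n = L(G_n)$, so I just need to verify the three hypotheses of the lemma for this setup.

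First, since each inclusion $G_n \hookrightarrow G$ extends to a trace-preserving normal $\ast$-embedding of group von Neumann algebras $L(G_n) \hookrightarrow L(G)$, we obtain an ascending sequence of von Neumann subalgebras. Because $G = \bigcup_n G_n$, every generator $\lambda_g$ of $L(G)$ lies in some $L(G_n)$, so the $\ast$-algebra $\bigcup_n L(G_n)$ contains the group algebra $\IC[G]$ and is therefore $\sigma$-weakly dense in $L(G)$.

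Next I need to show that $\cP_t$ preserves each $L(G_n)$ and that its restriction is the QMS associated with $\psi|_{G_n}$. Since $\cP_t \lambda_g = e^{-t\psi(g)}\lambda_g$, the operator $\cP_t$ acts diagonally in the group basis; for $g \in G_n$ we have $\cP_t \lambda_g \in L(G_n)$, and by normal extension $\cP_t(L(G_n)) \subset L(G_n)$. The restriction $\cP_t^n := \cP_t\rvert_{L(G_n)}$ clearly coincides with the QMS generated by the cnd length function $\psi|_{G_n}$, which by hypothesis satisfies $\GE(K,\infty)$ (respectively $\CGE(K,\infty)$).

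With all three hypotheses of the preceding lemma verified, the conclusion follows immediately: $\GE(K,\infty)$ for the restrictions implies $\GE(K,\infty)$ for $(\cP_t)$ on $L(G)$, and likewise for $\CGE(K,\infty)$. The only step that requires any thought is checking the invariance $\cP_t(L(G_n)) \subset L(G_n)$, but since the semigroup is a Fourier multiplier and $L(G_n)$ is exactly the subspace spanned (in the $\sigma$-weak sense) by $\{\lambda_g : g \in G_n\}$, this is automatic; no genuine obstacle arises.
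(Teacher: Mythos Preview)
Your proof is correct and is exactly the intended application of the preceding lemma; the paper states the corollary without proof precisely because it follows by taking $\cM=L(G)$, $\cM_n=L(G_n)$ and verifying the hypotheses as you did. The only point you might add for completeness is that the QMS on $L(G)$ associated with a cnd length function is $\Gamma$-regular (needed to invoke the lemma), but this is standard and implicit in the paper's treatment of Example~\ref{ex:group_alg}.
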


\begin{example}[Infinite symmetric group]
Let $S_\infty$ be the group of permutations of $\mathbb N$ that keep all but finitely many elements fixed. The QMS associated with length function induced by the non-normalized Hamming metric on $S_\infty$ satisfies $\CGE(\frac 1 2,\infty)$.
\end{example}

Recall that for a countable discrete group $G$, a \emph{F\o lner sequence} is a sequence $\{F_n\}_{n\ge 1}$ of nonempty finite subsets of  $G$ such that 
$$\lim_{n\to\infty}\frac{|gF_n\Delta F_n|}{|F_n|}=0,$$
for every $g\in G$, where $gF=\{gh:h\in F\}$ and $A\Delta B=[A\setminus (A\cap B)]\cup [B\setminus (A\cap B)]$. The group $G$ is called \emph{amenable} if it admits a F\o lner sequence. We refer to \cite[Chapter 2.6]{BO08} for more equivalent definitions and basic properties of amenable groups.
\begin{proposition}
Let $G$ be an amenable group, $\psi\colon G\to[0,\infty)$ a cnd function with associated $1$-cocycle $(H,\pi,b)$. If there exists an orthonormal basis $(e_j)_{j\in J}$ of $H$ such that $\langle b(g),e_j\rangle\in \{0,1\}$ for all $g\in G$, $j\in J$, then the QMS $(\cP_t)$ associated with $\psi$ satisfies $\CGE(1,\infty)$.
\end{proposition}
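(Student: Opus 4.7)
By the preceding lemma the generator of $(\cP_t)$ has the Lindblad form
\begin{equation*}
\cL x=\sum_{j\in J}v_j^{2}x+xv_j^{2}-2v_jxv_j,
\end{equation*}
where $v_j\in B(\ell^{2}(G))$ is the diagonal multiplication operator $v_j\delta_g=\langle b(g),e_j\rangle\delta_g$. The hypothesis $\langle b(g),e_j\rangle\in\{0,1\}$ forces $v_j$ to be the orthogonal projection onto $\ell^{2}(A_j)$ with $A_j=\{g\in G:\langle b(g),e_j\rangle=1\}$, and the family $\{v_j\}$ is pairwise commuting since simultaneously diagonal in the basis $(\delta_g)_{g\in G}$. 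The strategy is to apply Theorem~\ref{thm:com_proj} in a finite matrix algebra and then transfer to $L(G)$ via a Følner ultraproduct.

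If $G$ were finite, $B(\ell^{2}(G))$ would be a finite matrix algebra with the normalized trace, and Theorem~\ref{thm:com_proj} applied to the commuting projections $\{v_j\}$ would give $\CGE(1,\infty)$ for the QMS $\tilde\cP_t=e^{-t\cL}$ on $B(\ell^{2}(G))$. Since $\cL$ preserves $L(G)$, $\tilde\cP_t$ would restrict to $\cP_t$, and Remark~\ref{rmk:diff_calc_ind} would transport the gradient estimate down to $L(G)$ via the Lindblad calculus $(\bigoplus_j L^{2}(B(\ell^{2}(G))),\partial_j=[v_j,\cdot])$ — which is not the canonical calculus for $\cP_t$ on $L(G)$, but this is permitted. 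The same argument after tensoring with $\id_\cN$ would upgrade this to $\CGE(1,\infty)$, since $\{v_j\otimes 1\}$ is again a commuting family of projections.

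For general amenable $G$, fix a Følner sequence $(F_n)_{n\geq 1}$ and a free ultrafilter $\omega$ on $\IN$. Set $M_n=B(\ell^{2}(F_n))$ with normalized trace $\tau_n=|F_n|^{-1}\operatorname{Tr}$, and let $v_j^{(n)}=P_{F_n}v_jP_{F_n}\in M_n$; these form a commuting family of projections, only finitely many nonzero since $\bigcup_{g\in F_n}\supp(b(g))$ is finite. Theorem~\ref{thm:com_proj} yields $\CGE(1,\infty)$ for the QMS $\cP_t^{n}$ generated by $\cL_n(x)=\sum_j(v_j^{(n)}x+xv_j^{(n)}-2v_j^{(n)}xv_j^{(n)})$. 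A direct computation using the identity $a+b-2ab=(a-b)^{2}$ for $a,b\in\{0,1\}$ gives
\begin{equation*}
\cL_n(\iota_n(\lambda_g))=\psi(g)\,\iota_n(\lambda_g)=\iota_n(\cL(\lambda_g)),\qquad \iota_n(x):=P_{F_n}xP_{F_n},
\end{equation*}
so $\cP_t^{n}\circ\iota_n=\iota_n\circ\cP_t$ on the $^{\ast}$-algebra spanned by $\{\lambda_g\}$. The Følner condition makes $\iota_n$ asymptotically multiplicative and trace-preserving, so
\begin{equation*}
\Phi\colon L(G)\hookrightarrow\prod_{n\to\omega}(M_n,\tau_n),\qquad \Phi(x)=\bigl[(\iota_n(x))_n\bigr]_\omega,
\end{equation*}
is a trace-preserving $^{\ast}$-homomorphism intertwining $(\cP_t)$ with the ultraproduct semigroup $(\cP_t^{n})_\omega$. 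Since $\CGE(1,\infty)$ passes to tracial ultraproducts and then restricts to the semigroup-invariant image $\Phi(L(G))$, $(\cP_t)$ on $L(G)$ satisfies $\CGE(1,\infty)$.

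The main technical hurdle lies in the stability of $\CGE$ under the Følner ultraproduct and under the covariant embedding $\Phi$, which requires comparing the associated tangent bimodules; the exact compression intertwining $\cL_n\iota_n=\iota_n\cL$ established above (together with the uniform-in-$n$ Lindblad form with commuting projections) is precisely what makes this passage to the limit clean.
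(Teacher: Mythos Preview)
Your strategy coincides with the paper's: embed $L(G)$ into a F\o lner ultraproduct of the matrix algebras $B(\ell^2(F_n))$, apply Theorem~\ref{thm:com_proj} on each block to the commuting diagonal projections $v_j^{(n)}$, and pull the estimate back. Your exact intertwining $\cL_n\circ\iota_n=\iota_n\circ\cL$ on $\mathrm{span}\{\lambda_g\}$ is correct (indeed $a+b-2ab=(a-b)^2$ for $a,b\in\{0,1\}$ gives $\cL_n(\iota_n\lambda_g)\delta_h=1_{F_n}(gh)\,\|b(gh)-b(h)\|^2\delta_{gh}=\psi(g)\,\iota_n(\lambda_g)\delta_h$), and it streamlines the identification $\cP_t^n\circ\iota_n=\iota_n\circ\cP_t$ that the paper obtains only implicitly.

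The one point you should not leave as a black box is the sentence ``$\CGE(1,\infty)$ passes to tracial ultraproducts and then restricts to the semigroup-invariant image $\Phi(L(G))$.'' Neither stability is available as a general principle in the paper, and restriction of $\GE$ to an invariant subalgebra is \emph{not} automatic, because the subalgebra carries its own first-order calculus. The paper resolves exactly this by identifying the canonical tangent module $\cH$ of $(\cP_t)$ as a sub-bimodule of $\prod_\omega\cH_n$ with $\partial=(\partial_n)^\bullet$: a direct F\o lner computation shows
\[
\norm{\partial_n(\iota_n x)}_{\cH_n}^2=\sum_{h}\psi(h)\abs{x_h}^2\,\frac{\abs{h^{-1}F_n\cap F_n}}{\abs{F_n}}\longrightarrow \norm{\partial x}_\cH^2,
\]
and the weighted norms $\norm{\cdot}_\rho$ inherit the same ultraproduct structure, so that $\norm{\partial\cP_t x}_{\rho}^2=\lim_\omega\norm{\partial_n\cP_t^n x_n}_{\rho_n}^2$ and likewise on the right-hand side. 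Once this bimodule identification is in place, the blockwise $\GE(1,\infty)$ transfers verbatim; your outline is then complete.
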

\begin{proof}
To ease notation, we will only deal with $\GE(1,\infty)$. The proof of complete gradient estimate is similar, embedding $L(G)\otimes \cN$ into a suitable ultraproduct.

Let $(F_n)$ be a F\o lner sequence for $G$ and $\omega\in \beta\mathbb{N}\setminus\mathbb{N}$. Endow $B(\ell^2(F_n))$ with the normalized trace $\tau_n$ and let $p_n$ denote the projection from $\ell^2(G)$ onto $\ell^2(F_n)$. Then we have a trace-preserving embedding
\begin{equation*}
L(G)\to \prod_\omega B(\ell^2(F_n)),\,x\mapsto (p_n x p_n)^\bullet.
\end{equation*}

For each $j$, let $v_j$ be the linear operator on $\ell^2(G)$ given by $v_j (\delta_g) =\langle b(g),e_j\rangle \delta_g$, and denote its restriction to $\ell^2(F_n)$ by the same symbol. Note that for every fixed $n\in\mathbb{N}$, there are only finitely many indices $j\in J$ such that $v_j$ is non-zero on $\ell^2(F_n)$.

Let 
\begin{equation*}
\cH_n=\bigoplus_{j\in J}L^2(B(\ell^2(F_n)),\tau_n)
\end{equation*}
and
\begin{equation*}
\partial_n\colon B(\ell^2(F_n))\to \cH_n,\,a\mapsto ([v_j,a])_j.
\end{equation*}

For $x=\sum_g x_g\lambda_g$ with $\sum_g \psi(g)\abs{x_g}^2<\infty$, we have
\begin{align*}
v_j p_n x p_n (\delta_g)
&=1_{F_n}(g)v_j p_n x (\delta_g)\\
&=\sum_{h\in G}1_{F_n}(g)x_h v_j p_n (\delta_{hg})\\
&=\sum_{h\in G}1_{F_n}(g)1_{F_n}(hg)x_h v_j (\delta_{hg})\\
&=\sum_{h\in G}1_{F_n}(g)1_{F_n}(hg)x_h \langle b(hg),e_j\rangle  \delta_{hg},
\end{align*}
and 
\begin{align*}
p_n x p_n v_j (\delta_g)
&=\langle b(g),e_j\rangle p_n x p_n(\delta_g)\\
&= 1_{F_n}(g) \langle b(g),e_j\rangle p_n x(\delta_{g})\\
&=\sum_{h\in G}1_{F_n}(g)x_h \langle b(g),e_j\rangle p_n (\delta_{hg})\\
&=\sum_{h\in G}1_{F_n}(g)1_{F_n}(hg)x_h \langle b(g),e_j\rangle  \delta_{hg}.
\end{align*}
Hence
\begin{align*}
[v_j,p_n x p_n](\delta_g)&=(v_j p_n x p_n-p_n x p_n v_j) (\delta_g)\\
&=\sum_{h\in G}1_{F_n}(g)1_{F_n}(hg)x_h \langle b(hg)- b(g),e_j\rangle  \delta_{hg},
\end{align*}
and we get
\begin{align*}
&\quad\,\norm{\partial_n (p_n x p_n)}_{\cH_n}^2\\
&=\frac 1{\abs{F_n}}\sum_{g\in F_n}\sum_{j\in J}\langle [v_j,p_n x p_n]\delta_g,[v_j,p_n x p_n]\delta_g\rangle\\
&=\frac 1{\abs{F_n}}\sum_{g\in F_n}\sum_{j\in J}\sum_{h,h'\in G}\bigg(\overline{x_h} x_{h'}\overline{\langle b(hg)-b(g),e_j\rangle} \langle b(h'g)-b(g),e_j\rangle\\
&\qquad\qquad1_{F_n}(hg)1_{F_n}(h'g)\langle \delta_{h g},\delta_{h'g}\rangle\bigg)\\
&=\frac 1{\abs{F_n}}\sum_{g\in F_n}\sum_{j\in J}\sum_{h\in G}|x_h|^2\langle b(hg)-b(g),e_j\rangle|^2 1_{F_n}(hg)\\
&=\frac 1{\abs{F_n}}\sum_{g\in F_n}\sum_{h\in G}|x_h|^2\norm{b(hg)-b(g)}^2 1_{F_n}(hg)\\
&=\frac 1{\abs{F_n}}\sum_{g\in F_n}\sum_{h\in G}\psi(h)|x_h|^2 1_{F_n}(hg)\\
&=\sum_{h\in G}\psi(h)\abs{x_h}^2 \frac {\abs{h^{-1}F_n\cap F_n}}{\abs{F_n}},
\end{align*}
which converges to 
$$\sum_{h\in G}\psi(h)\abs{x_h}^2=\norm{\partial x}^2_{\cH}$$
as $n\to\omega$ by the F\o lner property of $(F_n)$ after an application of the dominated convergence theorem. Thus the tangent bimodule $\cH$ for $(\cP_t)$ can be viewed as a submodule of $\prod_\omega \cH_n$ with the obvious left and right action and $\partial=(\partial_n)^\bullet$.

Let $(\cP_t^n)$ be the QMS on $B(\ell^2(F_n))$ generated by $\partial_n^\dagger\partial_n$. Since $\langle b(\cdot),e_j\rangle$ takes values in $\{0,1\}$, the operators $v_j$'s are projections. Clearly all the $v_j$'s commute. Hence by Theorem \ref{thm:com_proj} and Remark \ref{rmk:extension_group_vna}, $(P_t^n)$ satisfies $\GE(1,\infty)$. From the ultraproduct structure of $\cH$ and $\partial$ we deduce
\begin{align*}
\norm{\partial \cP_t(x_n)^\bullet}_{(\rho_n)^\bullet}^2&=\lim_{n\to\omega}\norm{\partial_n \cP_t^n x_n}_{\rho_n}^2\\
&\leq \lim_{n\to\omega} e^{-2t}\norm{\partial_n x_n}_{\cP_t^n \rho_n}^2\\
&=e^{-2t}\norm{\partial (x_n)^\bullet}_{\cP_t (\rho_n)^\bullet}^2
\end{align*} 
for $(x_n)^\bullet\in L(G)$ and $(\rho_n)^\bullet\in L(G)_+$. In other words, $(\cP_t)$ satisfies $\GE(1,\infty)$.
\end{proof}

\begin{remark}
The group von Neumann algebra embeds into an ultraproduct of matrix algebras if and only if the underlying group is hyperlinear, so it might be possible to extend the previous proposition beyond amenable groups. 
\end{remark}

\begin{example}[Amenable groups acting on trees]
Let $\mathcal{T}$ be a tree (viewed as unoriented graph) and $G$ an amenable subgroup of $\mathrm{Aut}(\mathcal{T})$. For fixed $x_0\in \mathcal{T}$ define the length function $\psi$ on $G$ by $\psi(g)=d(x_0,gx_0)$, where $d$ is the combinatorial graph distance. As in the case of free groups, one sees that $\psi$ is conditionally negative definite and the associated $1$-cocycle can be described as follows (see \cite[Example C.2.2]{BHV08}):

Let $E=\{(x,y)\mid x\sim y\}$ be the set of oriented edges of $\mathcal{T}$, and for $e=(x,y)\in E$ write $\bar e=(y,x)$. Let $H=\{\xi\in \ell^2(E)\mid \xi(\bar e)=-\xi(e)\}$ with inner product
\begin{equation*}
\langle \xi,\eta\rangle=\frac 1 2\sum_{e\in E}\xi(e)\eta(e).
\end{equation*}
The action of $G$ on $H$ is given by $\pi(g)\xi(x,y)=\xi(gx,gy)$, and the $1$-cocycle $b$ is given by
\begin{equation*}
b(g)(e)=\begin{cases}1&\text{if }e\text{ lies on }[x_0,gx_0],\\-1&\text{if }\bar e\text{ lies on }[x_0,gx_0],\\0&\text{otherwise},\end{cases}
\end{equation*}
where $[x_0,gx_0]$ denotes the unique geodesic joining $x_0$ and $gx_0$.

Put $F=\{(x,y)\in E\mid d(x_0,x)<d(x_0,y)\}$. Then $(1_e-1_{\bar e})_{e\in F}$ is an orthonormal basis of $H$ and $\langle b(g),1_e-1_{\bar e}\rangle\in \{0,1\}$ for all $g\in G$ and $e\in F$. Thus the QMS associated with $\psi$ satisfies $\CGE(1,\infty)$.

For example this is the case for $G=\mathbb{Z}$ with $\psi(k)=\abs{k}$. Here the tree is the Cayley graph of $\IZ$ and the action is given by the left-regular representation. This QMS on $L(\IZ)$ corresponds, under the Fourier transform, to the Poisson semigroup on $L^\infty(S^1)$.
\end{example}

More generally, the Cayley graph of a group is a tree if and only if it is of the form $\IZ^{\ast k}\ast \IZ_2^{\ast l}$ for $k,l\geq 0$. This group is not amenable unless $k+l\leq 1$, but the free product structure allows us to obtain the same bound.

\begin{theorem}
If $G$ is a group whose Cayley graph is a tree and the cnd function $\psi$ is given by $\psi(g)=d(g,e)$, where $d$ is the combinatorial metric on the Cayley graph, then the QMS associated with $\psi$ satisfies $\CGE(1,\infty)$ and $\CLSI(2)$ and the constants in both inequalities are optimal.
\end{theorem}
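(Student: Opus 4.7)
My plan is to exploit the free product structure of $G$ and reduce to the constituent factors $\IZ$ and $\IZ_2$, for which the preceding amenable-groups proposition already delivers $\CGE(1,\infty)$. First, I would invoke the classical characterization of groups with tree Cayley graph (Bass--Serre theory): every such $G$ is isomorphic to $\IZ^{\ast k}\ast\IZ_2^{\ast l}$ for some $k,l\ge 0$, equipped with the natural generating set of the free product (one generator per $\IZ$ factor, one per $\IZ_2$ factor). With respect to this generating set, the word length $\psi(g)=d(g,e)$ is additive along reduced expressions: if $g=g_1g_2\cdots g_m$ with each $g_i$ a nontrivial element of a factor group $G_{j_i}\in\{\IZ,\IZ_2\}$ and $j_i\neq j_{i+1}$, then $\psi(g)=\sum_i\psi_{j_i}(g_i)$, where $\psi_j$ is the word length on the $j$-th factor. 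Combined with Boca's uniqueness of free products of trace-preserving UCP maps, this additivity identifies the Schoenberg QMS $(\cP_t)$ on $L(G)$ with the free product $\ast_j\cP_t^{(j)}$, where $\cP_t^{(j)}\lambda_g=e^{-t\psi_j(g)}\lambda_g$ on $L(\IZ)$ or $L(\IZ_2)$.

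Next I would verify $\CGE(1,\infty)$ for each factor semigroup. For $\IZ$ with $\psi(k)=|k|$ this is exactly the last example preceding the theorem (an amenable group acting on its Cayley tree, the line). For $\IZ_2$, which is finite and hence amenable, a minimal $1$-cocycle is $H=\IR$ with trivial representation and $b(0)=0$, $b(1)=1$; this satisfies the hypothesis $\langle b(g),e_1\rangle\in\{0,1\}$, so the same amenable-group proposition applies. Theorem \ref{thm:free_product} then lifts these estimates to the free product and yields $\CGE(1,\infty)$ for $(\cP_t)$. The $\CLSI(2)$ statement follows immediately from Corollary \ref{cor:MLSI} with $K=1$.

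For optimality, observe that since $\psi$ is integer-valued and attains the value $1$ on any nontrivial such $G$, the spectral gap of the $L^2$-generator $\cL_2$ equals $1$. A standard linearization argument (Taylor expand $\Ent(\rho)$ and $\cI(\rho)$ around $\rho=1$ with $\rho=1+\varepsilon a$, $\tau(a)=0$) shows that $\MLSI(\lambda)$ implies a Poincar\'e inequality with constant $\lambda/2$, forcing $\lambda\le 2$; likewise, testing $\GE(K,\infty)$ at $\rho=1$ on any generator $\lambda_s$ of length $1$ gives $e^{-2t}\le e^{-2Kt}$ for all $t\ge 0$, hence $K\le 1$. These upper bounds match the lower bounds just established. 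The main obstacle I anticipate is the clean identification $(\cP_t)=\ast_j\cP_t^{(j)}$: Boca's construction is usually stated for a single UCP map rather than for a semigroup, so I would verify it either by showing that both sides are tracially symmetric strongly continuous semigroups whose Fourier symbols agree on each $\lambda_g$ and invoking semigroup uniqueness, or by directly adapting the reduced-word analysis in the proof of Theorem \ref{thm:free_product}.
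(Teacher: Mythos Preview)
Your proof is correct and follows essentially the same approach as the paper: decompose $G\cong\IZ^{\ast k}\ast\IZ_2^{\ast l}$, identify the Schoenberg semigroup as the free product of the factor semigroups, apply the free-product stability Theorem~\ref{thm:free_product} together with the already established $\CGE(1,\infty)$ for the amenable factors, deduce $\CLSI(2)$ from Corollary~\ref{cor:MLSI}, and obtain optimality from the spectral gap. One small slip to fix: the $1$-cocycle on $\IZ_2$ cannot carry the trivial representation (the cocycle law $b(0)=b(1)+\pi(1)b(1)$ forces $\pi(1)b(1)=-b(1)$, i.e.\ the sign representation), but this does not affect your application of the amenable-group proposition, which only needs $\langle b(g),e_1\rangle\in\{0,1\}$.
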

\begin{proof}
As previously mentioned, $G$ is of the form $\IZ^{\ast k}\ast \IZ_2^{\ast l}$ with $k,l\geq 0$. It is not hard to see that the QMS associated with $\psi$ decomposes as free product of the QMS associated with the word length functions on the factors. Thus it satisfies $\CGE(1,\infty)$ by Theorem \ref{thm:free_product} and $\CLSI(2)$ by Corollary \ref{cor:MLSI}. Since both the gradient estimate and the modified logarithmic Sobolev inequality imply that the generator has a spectral gap of $1$, the constants are optimal.
\end{proof}

\begin{example}
If $G$ is a free group and $\psi$ the word length function, then the associated QMS satisfies $\CGE(1,\infty)$ and $\CLSI(2)$. Note that the usual logarithmic Sobolev inequality, which is equivalent to the optimal hypercontractivity, is still open. Some partial results have been obtained in \cite{JPPR15,RX16}. Our optimal modified LSI supports the validity of optimal LSI from another perspective.
\end{example}

\newcommand{\etalchar}[1]{$^{#1}$}

\end{document}